\documentclass[11pt]{article}

\usepackage[english]{babel}
\usepackage{amssymb}
\usepackage{graphics}

\usepackage{amsfonts}

\newcommand{\field}[1]{\mathbb{#1}}

\newcommand{\R}{\field{R}}
\newcommand{\N}{\field{N}}

\newcommand{\Z}{\field{Z}}

\textheight 230mm
\textwidth 155mm
\oddsidemargin 3mm
\evensidemargin 10mm
\topmargin -12mm
\parindent4mm
\frenchspacing

\newenvironment{proof}{\par\smallbreak{\sl Proof.~}}
{\unskip\nobreak\hfill \qed \par\medbreak}

\newcommand{\be}{\begin{equation}}

\newcommand{\vp}{\varphi}
\newcommand{\al}{\alpha}
\newcommand{\la}{\lambda}
\newcommand{\om}{\omega}

\newcommand{\A}{{\cal A}}

\newcommand{\Cb}{{\cal C}_{\rm bc}}
\newcommand{\CC}{{\cal C}}
\newcommand{\D}{{\cal D}}
\newcommand{\F}{{\cal F}}
\newcommand{\G}{{\cal G}}
\newcommand{\J}{{\cal J}}
\newcommand{\KK}{{\cal K}}

\newcommand{\de}{\partial}

\newcommand{\ee}{\end{equation}}
\newcommand{\qed}{\hfill \rule{2.3mm}{2.3mm}}
\newtheorem{theorem}{Theorem}[section]
\newtheorem{lemma}[theorem]{Lemma}
\newtheorem{corollary}[theorem]{Corollary}

\newtheorem{remark}[theorem]{Remark}

\newcommand{\reff}[1]{(\ref{#1})}

\title{
Regularity of Time-Periodic Solutions to Autonomous Semilinear Hyperbolic PDEs
} 
\newcounter{thesame}
\setcounter{thesame}{1}
\author{
	Irina Kmit
	\thanks{Institute of Mathematics, Humboldt University of Berlin, Unter den Linden 6, D-10099 Berlin. On leave from the
		Institute for Applied Problems of Mechanics and Mathematics,
		Ukrainian National Academy of Sciences. 
		{\small   E-mail:
			{\tt kmit@mathematik.hu-berlin.de}
	}}
	\ \ \ Lutz Recke \thanks{Institute of Mathematics, Humboldt University of Berlin, Unter den Linden 6, D-10099 Berlin.
		{\small   E-mail:
			{\tt recke@mathematik.hu-berlin.de}}
}}

\date{}

\begin{document}

\maketitle

\noindent
\begin{abstract}
This paper concerns autonomous boundary value problems for 1D  semilinear hyperbolic PDEs. For time-periodic classical solutions, which satisfy a certain non-resonance condition, we show the following:
If the PDEs are continuous with respect to the space variable $x$ and $C^\infty$-smooth with respect to the unknown function $u$, then the solution is $C^\infty$-smooth with respect to the time variable $t$, and if the PDEs are $C^\infty$-smooth with respect to $x$ and $u$, then the solution is $C^\infty$-smooth with respect to $t$ and $x$. The same is true for appropriate weak solutions.

Moreover, we show examples of time-periodic functions, which do not satisfy the non-resonance condition, such that they are weak, but not classical solutions, and such that they are classical solutions, but not $C^\infty$-smooth, neither with respect to $t$ nor with respect to $x$, even if the PDEs are $C^\infty$-smooth with respect to $x$ and $u$.

For the proofs we use  Fredholm solvability properties of linear time-periodic  hyperbolic PDEs and a result of E. N. Dancer about regularity of solutions to abstract equivariant equations.

\end{abstract}  

%\begin{keyword} 
{\it Keywords:} 1D  semilinear hyperbolic PDEs, 
autonomous boundary value problems,  solution regularity, non-resonance condition,
Fredholm solvability
%\end{keyword}

\section{Introduction}
\label{Introduction}

In this paper we consider time-periodic solutions to boundary value problems for 1D semilinear first-order hyperbolic systems of the type
$$
\partial_tu_j(t,x)+a_j(x)\partial_xu_j(t,x)=f_j(x,u(t,x))
$$
and 1D semilinear second-order hyperbolic equations
of the type
$$
\partial^2_tu(t,x)-a(x)^2\partial^2_xu(t,x)=f(x,u(t,x),\partial_tu(t,x),\partial_xu(t,x)).
$$

Let us formulate our results concerning first-order systems. Specifically,    we consider $2\times 2$ systems with reflection boundary conditions and time-periodic solutions with period one, i.e. solutions $u=(u_1,u_2)$ to problems of the type
(for $t \in \R$, $x \in [0,1]$)
\be
\label{sys}
%\left.
\begin{array}{l}
\partial_tu_j(t,x)+a_j(x)\partial_xu_j(t,x)=f_j(x,u(t,x)), \;j=1,2,\\
u_1(t,0)=r_1u_2(t,0),\;
u_2(t,1)=r_2u_1(t,1),\\
u(t+1,x)=u(t,x).
\end{array}
%\right\}
\ee
We suppose that for $j=1,2$
\be
\label{sysass}
%\left.
\begin{array}{l}
a_j \in C([0,1]),\; r_j \in\R,\;
a_j(x)\not=0 \mbox{ and } a_1(x)\not=a_2(x) \mbox{ for all } x \in [0,1],\\
\partial_{u_1}^k\partial_{u_2}^lf_j 
\mbox{ exist and belong to } C([0,1]\times \R^2)
\mbox{ for all } k,l \in \N \cup \{0\}. \end{array}
%\right\}
\ee
Further, we write (for $t \in \R$, $x,y \in [0,1]$,  and $j=1,2$) 
$$
\al_j(x,y):=\int_x^y\frac{dz}{a_j(z)}.
$$

\begin{theorem}
\label{sysTheorem}
Suppose that \reff{sysass} is fulfilled, and let $u\in  C(\R \times [0,1];\R^2)$ satisfy one of the conditions
\begin{eqnarray}
\label{nonressys}
&&\int_0^1\left(\frac{\partial_{u_1}f_1(x,u(t-\al_1(x,1),x))}{a_1(x)}-
\frac{\partial_{u_2}f_2(x,u(t-\al_2(x,1),x))}{a_2(x)}
\right)dx\nonumber\\
&&\not=\ln |r_1r_2|
\mbox{ for all } t \in \R
\end{eqnarray}
and
\begin{eqnarray}
\label{nonressys1}
&&\int_0^1\left(\frac{\partial_{u_1}f_1(x,u(t+\al_1(0,x),x))}{a_1(x)}-
\frac{\partial_{u_2}f_2(x,u(t+\al_2(0,x),x))}{a_2(x)}
\right)dx\nonumber\\
&&\not=\ln |r_1r_2|
\mbox{ for all } t \in \R.
\end{eqnarray}
Then the following is true:

(i) If $u$ satisfies the boundary and the periodicity conditions in \reff{sys} and if 
there exists a sequence $u^1,u^2,\ldots \in  C^1(\R \times [0,1];\R^2)$ such that, for $j=1,2$,
$$
|u^n_j(t,x)-u_j(t,x)|+
|\partial_tu^n_j(t,x)+a_j(x)\partial_xu^n_j(t,x)-f_j(x,u(t,x))| \to 0 \mbox{ for } n \to \infty
$$
uniformly with respect to $(t,x) \in \R \times [0,1]$, then $u$ is a classical solution to \reff{sys}, in particular, $u$ is $C^1$-smooth.
Moreover, 
%(ii) If $u$ is a classical solution to \reff{sys}, then 
all partial derivatives $\partial_t^ku$, $k \in \N$, exist and belong to 
$C(\R \times [0,1];\R^2)$.

(ii) If $u$ is a classical solution to \reff{sys} and if the functions $a_j$ and $f_j$, $j=1,2$, are $C^\infty$-smooth, then $u$ is  $C^\infty$-smooth also.
\end{theorem}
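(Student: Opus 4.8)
The plan is to recast problem \reff{sys} as an abstract operator equation and then apply the two tools announced in the introduction: the Fredholm solvability of linear time-periodic hyperbolic PDEs and the cited regularity result of E.\,N.\ Dancer for equivariant equations, exploiting that \reff{sys} is autonomous.

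First I would set up the abstract equation. Let $\co(\R\times[0,1];\R^2)$ denote the continuous functions that are period one in $t$. Integrating the $j$-th PDE along the characteristic curves of $\partial_t+a_j(x)\partial_x$ and incorporating the reflection and periodicity conditions, one rewrites \reff{sys}, in the mild (integrated) sense, as a fixed-point equation
$$
\G(u)=0,\qquad \G\in C^\infty\!\big(\co(\R\times[0,1];\R^2);\,\co(\R\times[0,1];\R^2)\big),
$$
where $\G$ is the sum of an affine operator — built from the transport terms, the reflection coefficients $r_j$ and the time-periodicity — and a superposition (Nemytskii) operator built from $f_1,f_2$; smoothness of $\G$ follows from \reff{sysass}. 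The crucial point is that, since $a_j$ and $f_j$ are $t$-independent, $\G$ is equivariant under the group $S_\theta v(t,x):=v(t+\theta,x)$, a strongly continuous group of linear isometries of $\co(\R\times[0,1];\R^2)$ that factors through $S^1=\R/\Z$: $\G(S_\theta v)=S_\theta\G(v)$. Next I would check that $u$ solves $\G(u)=0$: in case (ii) this is immediate from integrating \reff{sys} along characteristics, while in case (i) it follows by passing to the limit $n\to\infty$ in the characteristic integral representations of the $u^n$, using the two uniform convergences in the hypothesis, and then invoking the assumed reflection and periodicity conditions.

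Then I would invoke the Fredholm solvability theory: one of the non-resonance conditions \reff{nonressys}, \reff{nonressys1} guarantees that the linearization $\G'(u)$ is a Fredholm operator of index zero on $\co(\R\times[0,1];\R^2)$ — the non-resonance makes the non-compact part of $\G'(u)$ (essentially the monodromy along a full $x$-loop, whose multiplier involves $r_1r_2$ and the exponential of the integral in \reff{nonressys}) boundedly invertible, so $\G'(u)$ is an invertible operator plus a compact one. By Lyapunov--Schmidt reduction and the implicit function theorem, $\G^{-1}(0)$ then coincides near $u$ with a finite-dimensional $C^\infty$-submanifold of $\co(\R\times[0,1];\R^2)$. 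By equivariance the orbit $\{S_\theta u:\theta\in S^1\}$ lies in $\G^{-1}(0)$, and the orbit map $\theta\mapsto S_\theta u$ is continuous into $\co(\R\times[0,1];\R^2)$ because $u$ is uniformly continuous on $S^1\times[0,1]$. Dancer's regularity result for equivariant equations, applied to $\G$, $u$ and the action $\{S_\theta\}$, upgrades this orbit map to a $C^\infty$ map $S^1\to\co(\R\times[0,1];\R^2)$; differentiating it $k$ times at $\theta=0$ gives that $\partial_t^ku$ exists and lies in $\co(\R\times[0,1];\R^2)$ for every $k\in\N$.

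It remains to extract the stated classical/smooth regularity. In case (i), knowing $\partial_tu\in\co(\R\times[0,1];\R^2)$ together with $\G(u)=0$, the characteristic relation rewrites as $a_j(x)\partial_xu_j=-\partial_tu_j+f_j(x,u)$; since $a_j(x)\neq0$ this shows $\partial_xu_j\in\co(\R\times[0,1];\R^2)$ as well, so $u$ is a classical solution of \reff{sys}. In case (ii) the coefficients $a_j,f_j$ are $C^\infty$ and $u$ is already classical; differentiating $a_j(x)\partial_xu_j=-\partial_tu_j+f_j(x,u)$ repeatedly in $x$ and $t$ and solving each time for the highest-order $x$-derivative (dividing by $a_j(x)$), an induction on the number of $x$-derivatives — with base case $\partial_t^ku\in\co(\R\times[0,1];\R^2)$ for all $k$ — shows that all partial derivatives of $u$ are continuous, i.e.\ $u\in C^\infty$. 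The main obstacle is the Fredholm step: $\G'(u)$ being Fredholm on a space of merely continuous functions is not automatic, since the transport/monodromy part is not compact, and this is exactly where the non-resonance condition enters via the cited linear theory; granted that input, the remaining steps are a routine limit passage, the standard Lyapunov--Schmidt/Dancer machinery, and an elementary bootstrap.
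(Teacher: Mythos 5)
Your overall architecture is the one the paper follows: replace \reff{sys} by a weak/integrated formulation posed in spaces of continuous time-periodic functions, exploit the time-translation equivariance of the autonomous problem together with Dancer's theorem (in its $C^\infty$ version, Corollary~\ref{app1}) to obtain all derivatives $\partial_t^k u$, and then recover $x$-regularity by a bootstrap that uses $a_j(x)\neq 0$. Your limit passage for the weak solution in (i), your recovery of $\partial_x u$ from the directional derivative along characteristics plus $\partial_t u$, and your direct bootstrap on the PDE in (ii) are acceptable variants of what the paper does via the identity $u=C(u)u+D(u)(F(u)-B(u)u)$ and Lemma~\ref{BCD}(iv),(v).

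The genuine gap is the Fredholm step, which is the central analytic content of the theorem. You claim that the non-resonance condition makes the ``monodromy'' part of $\G'(u)$ boundedly invertible and that the rest is compact, so that $\G'(u)$ is invertible plus compact. The second half of that claim fails: linearizing the characteristic integral term yields a \emph{partial} integral operator (integration in one variable only, along one characteristic family), and such operators are not compact on spaces of continuous functions of $(t,x)$; in particular the diagonal part of the linearized nonlinearity cannot be disposed of as a compact perturbation. This is exactly why the paper (a) absorbs the diagonal part $B(u)$ of $F'(u)$ into the principal part and proves that $\bar A-B(u)$ is invertible, the non-resonance conditions \reff{nonressys}, \reff{nonressys1} being used precisely to invert the monodromy operator $I-C(u)$ (Lemma~\ref{Invert}, Corollary~\ref{iso}); and (b) treats the remaining off-diagonal part $\widetilde B(u)$ not by compactness of $(I-C(u))^{-1}D(u)\widetilde B(u)$ — which is still not compact — but via Nikolskii's criterion: the \emph{square} of this operator is compact, and the compactness of the blocks $D\widetilde B D$ and $D\widetilde B C$ is proved by a change of variables along characteristics that relies on the transversality assumption $a_1(x)\neq a_2(x)$ in \reff{sysass}, followed by Arzel\`a--Ascoli (proof of Lemma~\ref{Fred}). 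Your proposal contains none of these ingredients and never uses $a_1\neq a_2$, so the Fredholm hypothesis you feed into Dancer's theorem is unsupported, and the ``invertible plus compact'' decomposition you propose is not available. (A minor additional slip: Lyapunov--Schmidt does not make $\G^{-1}(0)$ a finite-dimensional $C^\infty$-manifold near $u$ in general, but Dancer's theorem does not require that, so this part is harmless.)
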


Now we formulate our results concerning time-periodic solutions to second-order equations subjected to one Dirichlet and one Neumann boundary conditions. More precisely, we consider  problems of the type
(for $t \in \R$ and $x \in [0,1]$)
\be
\label{eq}
%\left.
\begin{array}{l}
\partial^2_tu(t,x)-a(x)^2\partial^2_xu(t,x)=f(x,u(t,x),\partial_tu(t,x),\partial_xu(t,x)),\\
u(t,0)=0,\;
\partial_xu(t,1)=0,\\
u(t+1,x)=u(t,x).
\end{array}
%\right\}
\ee
We assume that
\be
\label{eqass}
%\left.
\begin{array}{l}
a \in C^1([0,1]),\; 
a(x)\not=0 \mbox{ for all } x \in [0,1],\\
\partial_2^j\partial_3^k\partial_4^lf
 \mbox{ exist and belong to } C([0,1]\times \R^3)
\mbox{ for all } j,k,l \in \N \cup \{0\}, 
\end{array}
%\right\}
\ee
where 
$\partial_jf$ denotes the derivative of the function  $f$ with respect to its $j$-th argument. More precisely, if $f=f(x,u,v,w)$, then
$\partial_2f$ is the derivative  with respect to $u$, 
$\partial_3f$ is the derivative  with respect to $v$, and 
$\partial_4f$ is the derivative  with respect to $w$. 
Further, we write (for $t \in \R$, $x,y \in [0,1]$, and $u \in C([0,1]\times \R^2)$) 
%\be
%\label{bdefeq}
%\left.
%\begin{array}{l}
\begin{eqnarray*}
&&\al(x,y):=\displaystyle\int_x^y\frac{dz}{a(z)},\\
&&b_+(t,x,u):= \displaystyle \partial_3f(x,u(t,x),\partial_tu(t,x),\partial_xu(t,x))+\frac{\partial_4f(x,u(t,x),\partial_tu(t,x),\partial_xu(t,x))}{a(x)},\\
&&b_-(t,x,u):=\displaystyle\partial_3f(x,u(t,x),\partial_tu(t,x),\partial_xu(t,x))-\frac{\partial_4f(x,u(t,x),\partial_tu(t,x),\partial_xu(t,x))}{a(x)}.
\end{eqnarray*}
%  \end{array}
%  \right\}
%  \ee
 
The weak formulation of the second-order problem
\reff{eq} (see Theorem \ref{sysTheorem} $(i)$), which will be used, is slightly more   
complicated than that for the first-order problem \reff{sys}, and, in fact, it is a technical tool only. We, therefore, will not include in Theorem \ref{eqTheorem} below a regularity result for weak solutions to \reff{eq},  but only regularity results for classical solutions to \reff{eq}.
% (similarly to Theorem \ref{sysTheorem} $(ii)$ and $(iii)$).
\begin{theorem}
\label{eqTheorem}
Suppose that \reff{eqass} is fulfilled. Let $u\in  C^2(\R \times [0,1])$ be a classical solution to \reff{eq}, and suppose that it satisfies one of the conditions
\be
\label{nonreseq}
\int_0^1\frac{b_{+}
(t+\al(x,1),x,u)
-b_{-}
(t-\al(x,1),x,u)}{a(x)}\,dx\not=0
\mbox{ for all } t \in \R
\ee
and
\be
\label{nonreseq1}
\int_0^1\frac{b_{+}
(t-\al(0,x),x,u)
-b_{-}
(t+\al(0,x),x,u)}{a(x)}\,dx\not=0
\mbox{ for all } t \in \R.
\ee
Then the following is true:

(i) All partial derivatives $\partial_t^ku$, $k \in \N$, exist and belong to 
$C(\R \times [0,1])$.

(ii) If the functions $a$ and $f$ are $C^\infty$-smooth, then $u$ is  $C^\infty$-smooth also.
\end{theorem}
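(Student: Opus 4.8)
\emph{Sketch of the intended proof.} The plan is to reduce \reff{eq} to a first-order hyperbolic system of the type behind Theorem \ref{sysTheorem}, to rewrite that system by the method of characteristics as a smooth fixed-point equation which --- because \reff{eq} is autonomous --- is equivariant under time shifts, and then to apply the abstract equivariant regularity result of Dancer. First I would pass to the characteristic variables $v:=\partial_tu+a(x)\partial_xu$ and $w:=\partial_tu-a(x)\partial_xu$. For a classical solution $u\in C^2(\R\times[0,1])$ of \reff{eq} the pair $(v,w)$ belongs to $C^1(\R\times[0,1];\R^2)$ and solves a system of the form
$$
\partial_tv-a(x)\partial_xv=F(x,u,v,w),\qquad \partial_tw+a(x)\partial_xw=F(x,u,v,w),
$$
where $F$ is built from $f$, $a$ and $a'$ (hence the hypothesis $a\in C^1([0,1])$, which is strictly stronger than what is needed in \reff{sys}), and where $u$ is recovered from $(v,w)$ by the \emph{nonlocal} formula
$$
u(t,x)=\int_0^x\frac{v(t,y)-w(t,y)}{2\,a(y)}\,dy,
$$
coming from $\partial_xu=(v-w)/2a$ together with the condition $u(t,0)=0$; this Dirichlet condition also becomes the reflection relation $w(t,0)=-v(t,0)$, while the Neumann condition $\partial_xu(t,1)=0$ becomes $v(t,1)=w(t,1)$. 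Thus \reff{eq} takes the shape of a problem \reff{sys} with reflection coefficients of absolute value $|r_1r_2|=1$ --- which is exactly why the right-hand side of \reff{nonreseq}, \reff{nonreseq1} is $0=\ln|r_1r_2|$ --- the only new feature being the nonlocal dependence of $F$ on $u$; this is why the weak formulation used in the proof (cf.\ Theorem \ref{sysTheorem}$(i)$) is ``slightly more complicated'' than the one for \reff{sys} and serves only as a technical device. Integrating $F$ along the characteristics $y\mapsto t\pm\al(x,y)$ and closing the system through the two reflection relations and the periodicity condition turns \reff{eq} into a fixed-point equation $V=\G(V)$ for $V=(v,w)$ in a Banach space $X$ of $1$-periodic continuous functions; the map $\G$ is $C^\infty$ because $f$ is $C^\infty$ in its last three arguments, and $\G$ commutes with the strongly continuous one-parameter group of time shifts $(S_\tau V)(t,x):=V(t+\tau,x)$ because \reff{eq} is autonomous.

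Next I would read off the non-resonance conditions. Linearising $\G$ at $V$ yields a linear time-periodic hyperbolic system in which $b_+$ and $b_-$ are, up to relatively compact lower-order and nonlocal corrections, the coefficients with which $v$ and respectively $w$ enter their own characteristic equations, and the integrals in \reff{nonreseq}, \reff{nonreseq1} are precisely the quantities whose non-vanishing, by the Fredholm solvability theory for such linear problems, excludes the resonant values of the transmission (monodromy) operator. Consequently either condition --- which, by periodicity and continuity of $u$, holds with a gap over $t\in\R$ that is uniform because $t$ ranges effectively over the compact circle $\R/\Z$ --- guarantees that $I-\G'(V)$ is a Fredholm operator of index zero on $X$ whose kernel is as small as possible: either $\{0\}$, in which case the implicit function theorem forces $V$, and hence $u$, to be time-independent, or one-dimensional and spanned by the tangent of the shift orbit $\tau\mapsto S_\tau V$. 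In the time-independent case (i) is trivial and, for (ii), \reff{eq} reduces to the two-point boundary value problem $a(x)^2u''=-f(x,u,0,u')$ for an ODE with $C^\infty$ coefficients, whose solutions are $C^\infty$.

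In the genuinely time-periodic case I would invoke Dancer's regularity theorem for abstract equivariant equations: since $\G$ is a $C^\infty$ map between Banach spaces that commutes with the shift group, since $\G(V)=V$, and since $I-\G'(V)$ is Fredholm of index zero with kernel equal to the orbit direction, the orbit map $\tau\mapsto S_\tau V$ is $C^\infty$ from $\R$ into $X$; equivalently, $V$ is a $C^\infty$ vector for the shift group, so all derivatives $\partial_t^kV$ exist and belong to $C(\R\times[0,1];\R^2)$, and therefore, since $\partial_tu=(v+w)/2$, all $\partial_t^ku$ exist and belong to $C(\R\times[0,1])$; this is (i). For (ii), assuming in addition $a,f\in C^\infty$, I would feed the $t$-regularity just obtained back into the characteristic integral equations and run a standard bootstrap --- at each order using the differential equations to trade one $x$-derivative for one $t$-derivative plus lower-order terms and coefficients that are now $C^\infty$ in $x$ --- to conclude that all mixed derivatives $\partial_t^j\partial_x^kV$, hence all $\partial_t^j\partial_x^ku$, are continuous, i.e.\ $u\in C^\infty(\R\times[0,1])$.

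The step I expect to be hardest is the functional-analytic groundwork of the first two paragraphs: setting up the Banach spaces and the weak formulation on which $\G$ is genuinely a $C^\infty$ map --- the shift group being merely strongly continuous, every derivative of $\G$ must originate from $f$ and from the characteristic integrals, never from the group action --- and verifying that the integrals in \reff{nonreseq}, \reff{nonreseq1} are exactly the right non-resonance quantities, a computation which here, unlike in the proof of Theorem \ref{sysTheorem}, must carry the nonlocal term $u=u(v,w)$ through the linearised transmission operator. Once this is in place, the appeal to Dancer's theorem and the $x$-bootstrap for (ii) are routine.
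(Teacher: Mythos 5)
Your route is the paper's own: pass to the Riemann invariants $v=\partial_tu+a\partial_xu$, $w=\partial_tu-a\partial_xu$, recover $u$ through the nonlocal operator $u(t,x)=\int_0^x\frac{v-w}{2a}\,dy$, note that the Dirichlet/Neumann conditions become reflection conditions with $|r_1r_2|=1$ (whence $\ln|r_1r_2|=0$ in \reff{nonreseq}, \reff{nonreseq1}), reformulate along characteristics in a space of $1$-periodic continuous functions, get Fredholmness of index zero of the linearization from the non-resonance condition plus compactness of the off-diagonal and nonlocal corrections, apply Dancer's equivariant result (Corollary \ref{app1}) for the time-regularity in (i), and bootstrap in $x$ for (ii). This is exactly Section \ref{proofeq} of the paper (transformation Lemma \ref{lem:FOS}, Fredholmness Lemma \ref{Fredeq1}, then Corollary \ref{app1} and Lemma \ref{BCD} $(v)$).

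There is, however, one genuine flaw in the middle of your argument: the claim that either non-resonance condition forces the kernel of $I-\G'(V)$ to be ``as small as possible'' --- either trivial (whence, via the implicit function theorem, $V$ stationary) or one-dimensional and spanned by the orbit tangent --- and your subsequent invocation of Dancer's theorem ``since \dots the kernel equals the orbit direction''. The non-resonance conditions give no control whatsoever on the kernel: they only make the principal transmission part (the analogue of $I-C(v)$) invertible, so that the full linearization is that isomorphism plus a relatively compact remainder, i.e.\ Fredholm of index zero; its kernel may a priori have any finite dimension. Fortunately the claim is also unnecessary: Theorem \ref{app} and Corollary \ref{app1} require only $\F(u^0)=0$ and Fredholmness of index zero of $\F'(u^0)$, with no kernel hypothesis, so deleting the kernel assertion (and the superfluous stationary-case branch) leaves your argument intact and aligned with the paper. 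Beyond that, what you defer as the ``hardest step'' is indeed where the substance lies: the diagonal coefficients $b_\pm$ must be absorbed exactly into the exponential weights of the characteristic representation (integration of a zero-order diagonal term along its own characteristic family is not compact on $\CC$), and the compactness of the remaining corrections is not obtained termwise but only for suitable compositions --- Nikolskii's criterion applied to the square, transversality of the two characteristic families ($-a\neq a$), a change of variables along characteristics and Arzela--Ascoli, with a separate such argument for the genuinely nonlocal piece $D\J$ coming from $\partial_2f\cdot Jw$ (Lemma \ref{Fredeq1}).
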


\begin{remark}
\label{Hopf}\rm
In most applications, solutions to problems of the type \reff{sys} and \reff{eq} are found as a result of Hopf bifurcations from stationary solutions  \cite{KR3,KR3a,Kos1} and by continuation of such solutions with respect to parameters \cite{KR4}.
\end{remark}

\begin{remark}
\label{Hale}\rm
The paper \cite{HS} of J. K. Hale and J. Scheurle
concerns smoothness with respect to time of solutions to abstract autonomous semilinear evolution equations if those solutions are bounded and close to be constant in time. The results are applied to slightly damped nonlinear wave equations in 1D with constant coefficients, namely
\be
\label{HS}
\partial^2_tu(t,x)-\partial^2_xu(t,x)+\delta \partial_tu(t,x)-u(t,x)-\lambda u(t,x)=f(u(t,x)),
\ee
subjected to homogeneous Dirichlet boundary conditions.
The function $f:\R \to \R$ is smooth and of order $o(|u|)$ for $u \to 0$, $\lambda$ is small, $\delta$ is positive and small.
It is shown that sufficiently small bounded solutions are smooth with respect to time.

Let us compare this with Theorem \ref{eqTheorem}:
On one hand, the equation in our problem \reff{eq} is more general than equation \reff{HS}.
Moreover,  in Theorem \ref{eqTheorem} we do not suppose that the solution is close to be constant in time. On the other hand, our Theorem \ref{eqTheorem} concerns time-periodic solutions only, not general bounded ones. 
%We do not know if the assertions of Theorem \ref{eqTheorem} are true for general bounded solutions. 
Anyway, if one applies definitions of the functions $b_+$ and $b_-$ to equation \reff{HS}, then
$$
b_+(t,x,u)=b_-(t,x,u)=-\delta.
$$
Hence, the assumption $\delta>0$ of \cite{HS} implies that the assumptions of Theorem \ref{eqTheorem} are fulfilled.
\end{remark}

\begin{remark}
\label{telegraph}\rm
Let us consider Theorem \ref{eqTheorem} in the special case of a nonlinear wave equation which is slightly more general than \reff{HS}, namely
$$
\partial^2_tu(t,x)-a(x)^2\partial^2_xu(t,x)=\beta_1(x)\partial_tu(t,x)+\beta_2(x) \partial_xu(t,x)+f(x,u(t,x)).
$$
If one applies definitions of $b_+$ and $b_-$ to this equation, then
$$
b_+(t,x,u)=\beta_1(x)+\frac{\beta_2(x)}{a(x)},\;
b_-(t,x,u)=\beta_1(x)-\frac{\beta_2(x)}{a(x)}.
$$
Hence, the conditions \reff{nonreseq}  and \reff{nonreseq1} are identical, and they are satisfied for any $u$ if and only if
$$
\int_0^1\frac{\beta_1(x)}{a(x)}\,dx\not=0.
$$
\end{remark}

\begin{remark}
\label{multidim}\rm
We do not know if Theorems \ref{sysTheorem} and \ref{eqTheorem} can be generalized to cases of  
more than one space dimension.
The reason is that linear autonomous hyperbolic partial differential operators with one space dimension essentially differ from those with more than one space dimension:
They satisfy the spectral mapping property in $L^p$-spaces \cite{Lopes} and, which is more important
for applications to nonlinear problems,  in $C$-spaces \cite{Lichtner1}.
Moreover,  they generate Riesz bases (see, e.g. \cite{Guo,Mogul}). This is  not the case,
in general, if the space dimension is larger than one (see the counter-example of M. Renardy in \cite{Renardy}).
Therefore, the question of  Fredholmness of those operators in appropriate spaces of time-periodic functions is highly difficult.
\end{remark}

%\begin{remark}
%\label{quasi}\rm
%We do not know if Theorem \ref{sysTheorem}  can be generalized
%to quasilinear systems of the type
%$$
%\partial_tu_j(t,x)+a_j(x,u(t,x))\partial_xu_j(t,x)=f_j(x,u(t,x)), \;j=1,2,
%$$
%and if Theorem \ref{eqTheorem}  can be generalized
%to quasilinear equations of the type
%$$
%\partial^2_tu(t,x)-a(x,u(t,x))^2\partial^2_xu(t,x)=f(x,u(t,x),\partial_tu(t,x),\partial_xu(t,x)). 
%$$
%The reason is that in the proofs of Theorems \ref{sysTheorem} and \ref{eqTheorem} we work with function spaces, which depend on the coefficients $a_j$ and $a$, respectively.
%\end{remark}

\begin{remark}
\label{nn}\rm
Theorem \ref{sysTheorem} can be generalized to problems for $n \times n$ first-order hyperbolic systems of the type
(with natural numbers $m<n$)
\be 
\label{mn}
%\left.
\begin{array}{l}
\partial_tu_j(t,x)+a_j(x)\partial_xu_j(t,x)=f_j(x,u(t,x)), \;j\le n,\\
\displaystyle
u_j(t,0)=\sum_{k=m+1}^nr_{jk}u_k(t,0),\; j\le m,\\
\displaystyle
u_j(t,1)=\sum_{k=1}^mr_{jk}u_k(t,1),\, m<j\le n.
\end{array}
%\right\}
\ee
Here, instead of non-resonant  conditions \reff{nonressys} and
\reff{nonressys1}, one considers  the following sufficient conditions
$$
\max_{s,t \in [0,1]}
\max_{ j \le m}\sum_{k=m+1}^n\sum_{l=1}^m|r_{jk}r_{kl}|\exp\int_0^1
\left(\frac{\partial_{u_k}f_k(x,u(t,x))}{a_k(x)}-
\frac{\partial_{u_j}f_j(x,u(s,x))}{a_j(x)}\right)\,dx<1
$$
and
$$
\max_{s,t \in [0,1]}
\max_{m< j \le n}\sum_{k=1}^m\sum_{l=m+1}^n|r_{jk}r_{kl}|\exp\int_0^1
\left(\frac{\partial_{u_k}f_k(x,u(t,x))}{a_k(x)}-
\frac{\partial_{u_j}f_j(x,u(s,x))}{a_j(x)}
\right)\,dx<1.
$$
If one of these two conditions is satisfied for a function $u$, then the linearization in $u$ of problem \reff{mn} has Fredholm solvability properties (cf. \cite{KR4}).

	In \cite{smoothing}, using different approach,  the issue of higher  regularity of time-periodic solutions to general {\it linear nonautonomous} first-order hyperbolic systems, namely to
systems
\begin{equation}\label{eq:1}
	(\partial_t  + a(x,t)\partial_x + b(x,t))
	u = f(x,t), 
\end{equation}
subjected to {\it nonlinear} reflection boundary conditions of the type
\be\label{eq:dis}
\begin{array}{ll}
	u_j(0,t) = h_j(z(t)), & 1\le j\le m, \ \ 
	\\
	u_j(1,t) = h_j(z(t)), & m< j\le n, \ \  
\end{array}
\ee
where
\be\label{z}
z(t)=\left(u_1(1,t),\dots,u_{m}(1,t),u_{m+1}(0,t),\dots,u_{n}(0,t)\right)\nonumber
\ee 
is addressed. It is shown that  continuous solutions are $C^l$-regular
whenever $l$ conditions of the type
\be\label{contr2}
\exp \left\{\int_x^{x_j}
\left(\frac{b_{jj}}{a_{j}}-r\frac{\partial_ta_j}{a_{j}^2}\right)(\eta,\om_j(\eta;x,t))\,d\eta\right\}\sum_{k=1}^n
\left|\partial_kh_j^\prime(z)\right|<1
\ee
for all $j\le n$, $x\in[0,1]$, $t\in\R$, $z\in\R^n$, and $r=0,1,\dots,l$, are fulfilled.  It turns out that 
 the nonautonomous setting essentially relates the 
solution regularity with the number of conditions of the type \reff{contr2} (see
\cite[Remark 1.4]{KR2} and \cite[Subsection 3.6]{KRT3}).
\end{remark}

\begin{remark}
\label{nonauto}\rm
In \cite{KR4} we consider the question of smoothness with respect to time of time-periodic solutions to non-autonomous semilinear problems of the type
$$
\partial_tu_j(t,x)+a_j(x)\partial_xu_j(t,x)=f_j(t,x,u(t,x)), \;j=1,2,
$$
and 
$$
\partial^2_tu(t,x)-a(x)^2\partial^2_xu(t,x)=f(t,x,u(t,x),\partial_tu(t,x),\partial_xu(t,x)). 
$$
In \cite{KR4} it is supposed that the linearized problems (linearization in the solution to the nonlinear problem) do not have nontrivial solutions. This is essentially different to the autonomous case, because in the autonomous case the linearization in the solution $u$ to the nonlinear problem has $\partial_tu$ as solution. On the other hand, in the non-autonomous case this assumption concerning the linearized problem implies not only regularity with respect to time of the solution to the nonlinear problem, but also its local uniqueness and smooth dependence on parameters.

 In  \cite{second} we studied higher regularity of  time-periodic solutions to non-autonomous linear problems for the equation
 $$
 \partial^2_tu-a(t,x)^2\partial^2_xu+a_1(t,x)\de_tu + a_2(t,x)\de_xu + a_3(t,x)u =f(t,x). 
 $$ 
 We showed that any additional order of regularity requires additional non-resonance conditions.
\end{remark}

The remaining part of the paper is organized as follows:

Section \ref{proofssys} concerns
problem \reff{sys} for  first-order systems and Section \ref{proofeq} concerns problem~\reff{eq} for second-order equations.

In Subsection \ref{weak} we introduce an appropriate weak formulation of problem \reff{sys} such that Corollary \ref{app1} is applicable to it. In Subsection \ref{Fredholm} we show that the main assumption of Corollary \ref{app1}, which is Fredholmness of the linearized problem, is fulfilled whenever one of the conditions \reff{nonressys} and \reff{nonressys1}
is satisfied.
Using this, we prove Theorem \ref{sysTheorem} in Subsection~\ref{Proofsys}.

In Subsection \ref{Transfo} we show that the second-order problem \reff{eq} is equivalent to a first-order problem of the type \reff{sys}, but with additional nonlocal integral terms in the equations and in the boundary conditions.

Finally, in Appendix we provide Theorem \ref{app} and Corollary \ref{app1} from abstract  nonlinear analysis.

\section{Proofs for first-order systems}
\label{proofssys}
\setcounter{equation}{0}
\setcounter{theorem}{0}
In this section we will prove  Theorem \ref{sysTheorem}. Hence, we will suppose that  assumption \reff{sysass} is satisfied. 

We will work with the function space
$$
\CC:=\{u \in C(\R\times[0,1];\R^2):\; u(t+1,x)=u(t,x) \mbox{ for all } t \in \R, \; x \in [0,1]\},
$$
which is equipped and complete with the maximum norm 
$$
\|u\|_\infty:=\max \{|u_j(t,x)|:\; t \in \R, \; x \in [0,1],\; j=1,2\},
$$
and with the function space
$\CC^1:=\{u \in \CC:\; \mbox{$u$ is $C^1$-smooth}\}$, which is equipped and complete with the norm $\|u\|_\infty+\|\partial_tu\|_\infty+\|\partial_xu\|_\infty$. Further, we will work with the closed subspaces
$$
\Cb:=\{u \in \CC: \; u_1(t,0)=r_1u_2(t,0), u_2(t,1)=r_2u_1(t,1) \mbox{ for all } t \in \R\},\;
\Cb^1:=\Cb\cap \CC^1
$$
in $\CC$ and $\CC^1$, respectively.
Finally, we consider the linear bounded operator
$$
A:\CC^1\to\CC:\; Au:=(\partial_tu_1+a_1\partial_xu_1,\partial_tu_2+a_2\partial_xu_2)
$$
and the nonlinear $C^\infty$-smooth superposition operator
$$
F:\CC\to \CC:\; 
[F(u)](t,x):=(f_1(x,u(t,x)),f_2(x,u(t,x))).
$$
Obviously, a function $u$ is a classical solution to problem \reff{sys} if and only if it is a solution to the problem
\be
\label{absys}
u \in \Cb^1: \; Au=F(u).
\ee

Now we aim at applying Corollary \ref{app1} to problem \reff{absys}. To this end,  we introduce the one-parameter group $S_\vp \in {\cal L}(\CC)$, $\vp \in \R$, which is defined by
$$
[S_\vp u](t,x):=u(t+\vp,x).
$$
It is easy to verify that $S_\vp$ is strongly continuous on $\CC$ as well as on $\CC^1$, i.e. that the map $\vp \mapsto S_\vp u$ is continuous from $\R$ into $\CC$ for all $u \in \CC$ and that
this map is continuous from $\R$ into $\CC^1$ for all $u \in \CC^1$. Moreover, we have
$$
S_\vp Au=AS_\vp u,\; S_\vp F(u)=F(S_\vp u).
$$
Hence, Corollary \ref{app1} is applicable (with $U=\Cb^1$, $V=\CC$, and $\F(u)=Au-F(u)$)
to all solutions of \reff{absys} such that
$A-F'(u)$ is Fredholm of index zero from $\Cb^1$ into $\CC$. But, unfortunately, $A-F'(u)$ is not Fredholm of index zero from $\Cb^1$ into $\CC$, no matter if $u$ satisfies one of the conditions \reff{nonressys} and \reff{nonressys1} or not.
The reason for that is, roughly speaking, the following: The domain of definition $\Cb^1$ is slightly too small. It should be enlarged properly, or, in other words, we should work with an appropriate weak formulation of \reff{sys}.

\subsection{Weak formulation}
\label{weak}
Let $\bar A: \D(\bar{A})\subseteq \CC \to \CC $ denote
 the closure of the linear operator $A$.
The appropriate weak formulation of \reff{sys} is
\be
\label{absys1}
u \in \D(\bar{A})\cap\Cb: \;\bar Au=F(u).
\ee

\begin{lemma}
\label{closable}
The operator $A$ is closable in $\CC$.
\end{lemma}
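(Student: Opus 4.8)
The plan is to show that $A$ is closable by verifying the standard criterion: if $u^n \in \CC^1$ with $u^n \to 0$ in $\CC$ and $Au^n \to v$ in $\CC$, then $v = 0$. Since $A = (\partial_t u_1 + a_1 \partial_x u_1, \partial_t u_2 + a_2 \partial_x u_2)$ acts componentwise, it suffices to treat a single transport operator $u \mapsto \partial_t u + a(x)\partial_x u$ with $a \in C([0,1])$, $a(x) \neq 0$, on the space of continuous $1$-periodic-in-$t$ functions.

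First I would introduce characteristics. For the $j$-th equation set, as in the paper, $\al_j(x,y) = \int_x^y dz/a_j(z)$; along a characteristic curve the combination $\partial_t u + a_j(x)\partial_x u$ is the derivative of $u$ following that curve (after the natural reparametrization by $x$). Concretely, fixing a component and a base point, the function $s \mapsto u^n(t + \al(x_0, s), s)$ is $C^1$ in $s$ with derivative $\frac{1}{a(s)}\bigl(\partial_t u^n + a \,\partial_x u^n\bigr)(t+\al(x_0,s), s)$. Integrating this identity from $x_0$ to $x$ gives an exact integral representation
$$
u^n(t+\al(x_0,x), x) - u^n(t+\al(x_0,\xi),\xi) \;=\; \int_\xi^x \frac{(Au^n)_{\text{component}}(t+\al(x_0,s),s)}{a(s)}\, ds .
$$
This is the key step: it converts the differential operator into something controlled by sup-norms only.

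Next I would pass to the limit. Because $u^n \to 0$ uniformly, the left-hand side tends to $0$ uniformly in the relevant parameters; because $(Au^n)_{\text{component}} \to v_{\text{component}}$ uniformly and $1/a$ is continuous (hence bounded) on $[0,1]$, the right-hand side tends to $\int_\xi^x \frac{v_{\text{component}}(t+\al(x_0,s),s)}{a(s)}\, ds$ uniformly. Hence $\int_\xi^x \frac{v_{\text{component}}(t+\al(x_0,s),s)}{a(s)}\, ds = 0$ for all admissible $\xi, x, t$. Differentiating in $x$ (or using that the integrand is continuous and the integral vanishes on every subinterval) forces $v_{\text{component}} \equiv 0$ along every characteristic, and since the characteristics foliate $\R \times [0,1]$, we get $v_{\text{component}} \equiv 0$ on $\R \times [0,1]$. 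Doing this for $j=1,2$ yields $v = 0$, so $A$ is closable.

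The only mildly delicate point — the "main obstacle" — is purely bookkeeping: since $a_j$ is merely continuous (not $C^1$), one cannot appeal to smooth flows or Gronwall-type estimates, but this is not actually needed, because $\al_j$ is $C^1$ in $x$ with nonvanishing derivative $1/a_j$, so the change of variables along characteristics is legitimate and the representation formula above is elementary calculus applied to the $C^1$ functions $u^n$. One should also note the periodicity plays no essential role here beyond keeping everything inside the stated spaces; the argument is local in $(t,x)$. I would present the single-component computation in detail and then remark that the two components are handled identically.
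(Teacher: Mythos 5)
Your argument is correct and follows essentially the same route as the paper's proof: both integrate the expression $\partial_t u^n_j + a_j\partial_x u^n_j$ along characteristics (using that $s\mapsto \al_j(x_0,s)$ is $C^1$ with derivative $1/a_j$), pass to the uniform limit to conclude that the integral of $v_j/a_j$ along every characteristic segment vanishes, and then deduce $v=0$ from continuity and the nonvanishing of $a_j$. The only difference is cosmetic: you integrate over arbitrary subintervals $[\xi,x]$ and invoke the foliation by characteristics explicitly, while the paper integrates from the boundary point $y=0$; the substance is identical.
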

\begin{proof}
Take a sequence $u^1,u^2,\ldots \in \CC^1$ and 
$v \in \CC$ such that $\|u^n\|_\infty+\|Au^n-v\|_\infty \to 0$ for $n \to \infty$. 
We have to show that $v=0$.

From  $\partial_y \al_1(x,y)=1/a_1(y)$  it follows that
\begin{eqnarray*}
&&u^n_1(t,x)-u^n_1(t+\al_1(x,0),0)=\int_0^x
\frac{d}{dy}u^n_1(t+\al_1(x,y),y)\,dy\\
&&=\int_0^x(\partial_tu^n_1(t+\al_1(x,y),y)+a_1(y)\partial_xu^n_1(t+\al_1(x,y),y))\frac{dy}{a_1(y)}. 
\end{eqnarray*}
Taking the limit as $n \to \infty$, we get $\int_0^xv_1(t,y)/a_1(y)\,dy=0$. It follows that $v_1(t,x)/a_1(x)=0$,
i.e. $v_1=0$. Here we used the assumption that $a_1(x) \not=0$ for all $x \in [0,1]$ (cf. \reff{sysass}).

Similarly one shows that $v_2=0$.
\end{proof}

The closure $\bar A$ of $A$ is, by definition, the smallest closed extension of $A$ in $\CC$.
The domain of definition $\D(\bar A)$ of $\bar A$ is the set of all $u \in \CC$ such that there exist a sequence $u^1,u^2,\ldots \in \CC^1$ and an element $v \in \CC$ such that 
\be 
\label{closure}
\|u^n-u\|_\infty+\|Au^n-v\|_\infty \to 0
\mbox{ for } n \to \infty, 
\ee
and $\bar A$ works on $u \in \D(\bar A)$ as $\bar A u:=v$. Because of Lemma \ref{closable}, this definition is correct, i.e. independent of the choice of $u^1,u^2,\ldots$ and $v$ with \reff{closure}.

\begin{remark}
\label{larger}\rm
For 1-periodic continuous functions $\phi:\R \to \R$ and $y \in [0,1]$, define $u_{\phi,y} \in \CC$ by
$$
u_{\phi,y}(t,x):=(\phi(t+\al_1(x,y)),\phi(t+\al_2(x,y))).
$$
Then $u_{\phi,y}\in \D(\bar A)$: Indeed, take a sequence of 1-periodic $C^1$-functions $\phi^1,\phi^2,\ldots:\R \to \R$, such that $\phi^n(t)\to \phi(t)$ for $n \to \infty$ uniformly with respect to $t \in \R$. Then $u_{\phi^n,y} \in \CC^1$ and $Au_{\phi^n,y}=0$. Hence, \reff{closure} is satisfied with $u=u_{\phi,y}$,
$u^n=u_{\phi^n,y}$ and $v=0$. But $u_{\phi,y} \notin \CC^1$, if $\phi$ is not $C^1$-smooth. Hence, $\D(\bar A)$ is larger than $\CC^1$, i.e. $\bar A$ is a proper extension of $A$.
\end{remark}

\begin{remark}
\label{counterex}\rm
Consider problem \reff{sys} with $a_1(x)\equiv 4$, $a_2(x)\equiv -4$, $f_1(x,u)\equiv f_2(x,u)\equiv 0$ and $r_1=-r_2=1$, i.e.
\be
\label{sysex}
%\left.
\begin{array}{l}
\displaystyle\partial_tu_1(t,x)+4\partial_xu_1(t,x)=
\partial_tu_2(t,x)-4\partial_xu_2(t,x)=0,\\
u_1(t,0)=u_2(t,0),\;
u_2(t,1)=-u_1(t,1),\\
u(t+1,x)=u(t,x).
\end{array}
%\right\}
\ee
For continuous functions $\phi:\R \to \R$, which satisfy $\phi(t+1/2)=-\phi(t)$ for all $t \in \R$, define $u_{\phi} \in \CC$ by
$$
u_{\phi}(t,x):=(\phi(t-x/4),\phi(t+x/4)).
$$
Then $u_{\phi}\in \D(\bar A)\cap \Cb$. Hence, if $\phi$ is not $C^1$-smooth, then $u_{\phi}$ is a solution to \reff{absys1}, but not to \reff{absys}. Similarly, if $\phi$ is $C^1$-smooth, but not $C^2$-smooth,
then $u_{\phi}$ is a classical solution to \reff{sysex}, but $u_{\phi}(\cdot,x)$ is not $C^2$-smooth. This is possible because $u_{\phi}$  satisfies neither \reff{nonressys} nor \reff{nonressys1}.
\end{remark}

It is well-known that the domain of definition of a closed linear operator, equipped  with the graph norm, 
is complete.
Hence, the function space $\D(\bar A)\cap \Cb$, equipped  with the norm
$\|u\|_\infty+\|\bar A u\|_\infty$, is a Banach space.
Moreover, the shift operators $S_\vp$ constitute a $C_0$-group of linear bounded operators on this Banach space. Hence, problem \reff{absys1} is  a candidate for an application of Corollary \ref{app1} (with $U=\D(\bar A)\cap \Cb$, $V=\CC$ and $\F(u)=\bar Au-F(u)$). All the more this is true because of the following lemma.

\begin{lemma}
\label{Fred}
Let a function $u \in \CC$ satisfy one of the conditions
\reff{nonressys} and \reff{nonressys1}. Then the operator $\bar A-F'(u)$ is Fredholm of index zero from $\D(\bar A)\cap \Cb$ (equipped with the graph  norm) into $\CC$.
\end{lemma}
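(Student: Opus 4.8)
The plan is to reduce, via the method of characteristics, the Fredholm question for $\bar A-F'(u)$ to the same question for an explicit operator acting on $1$-periodic continuous functions of $t$ only, and then to analyze that operator directly. Write $B(t,x):=F'(u)=\big(\de_{u_k}f_j(x,u(t,x))\big)_{j,k=1,2}$ for the (continuous, $1$-periodic, matrix-valued) linearized coefficient, and split $B=D+N$ into its diagonal part $D=\mathrm{diag}\,(\de_{u_1}f_1,\de_{u_2}f_2)$ and its off-diagonal part $N$. Since $F'(u)\in{\cal L}(\CC)$, the operator $\bar A-F'(u)$ has domain $\D(\bar A)\cap\Cb$ and a graph norm equivalent to that of $\bar A$, so it suffices to treat $\bar A-F'(u):\D(\bar A)\cap\Cb\to\CC$.

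First I would integrate the equation $\bar Av-F'(u)v=g$ along the characteristics of $\de_t+a_j\de_x$; along each of them $v$ solves a linear system of ordinary differential equations, and since $a_j\neq 0$ on $[0,1]$ these may be integrated from either the left or the right endpoints. A Picard iteration for the resulting Volterra system in $x$ (using compactness of $[0,1]$ and boundedness of $B$ and $1/a_j$) then yields a bounded solution operator: given the two endpoint traces chosen as data --- collected into an element $p$ of a space $\CC_{per}$ of $1$-periodic continuous $\R^m$-valued functions of $t$ ($m\in\{1,2\}$, depending on the signs of $a_1,a_2$) --- and given $g\in\CC$, there is a unique $v\in\D(\bar A)$ with $\bar Av-F'(u)v=g$ and the prescribed data, depending linearly and boundedly on $(p,g)$. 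Under the resulting identification of $\D(\bar A)$ with $\CC_{per}\times\CC$ (graph norm versus product norm), the two remaining boundary identities defining $\Cb$ become a single equation $(I-{\cal R})p={\cal S}g$ with $I-{\cal R}\in{\cal L}(\CC_{per})$ and ${\cal S}\in{\cal L}(\CC,\CC_{per})$, and a standard diagram chase shows that $\bar A-F'(u)$ is Fredholm of index zero from $\D(\bar A)\cap\Cb$ into $\CC$ if and only if $I-{\cal R}$ is Fredholm of index zero on $\CC_{per}$. Here ${\cal R}={\cal R}_0+{\cal K}$: the principal part ${\cal R}_0$ arises from transporting the data along characteristics while retaining only the scalar linear ODEs generated by the diagonal coefficient $D$, so it is a finite composition of shifts $S_\vp$, reflections (multiplications by $r_1$ and $r_2$) and multiplications by the exponential factors $\exp\big(\int_0^1\de_{u_j}f_j(x,u(\cdot,x))/a_j(x)\,dx\big)$ taken along characteristics; the remainder ${\cal K}$ collects all terms of the Picard series that carry at least one factor from the off-diagonal part $N$.

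It remains to prove that $I-{\cal R}_0$ is invertible and that ${\cal K}$ is compact. For ${\cal R}_0$: the associated ``round-trip'' operator is, on each component, a weighted shift $c(t)S_\sigma$ with $\sigma$ fixed and with
$$
\ln|c(t)|=\pm\left(\int_0^1\left(\frac{\de_{u_1}f_1(x,u(t',x))}{a_1(x)}-\frac{\de_{u_2}f_2(x,u(t'',x))}{a_2(x)}\right)dx-\ln|r_1r_2|\right),
$$
where $t',t''$ denote the pertinent characteristic-shifted arguments of $t$; hence the assumed one of \reff{nonressys}, \reff{nonressys1} is precisely the statement that $|c(t)|\neq 1$ for all $t\in\R$. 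Since $t\mapsto|c(t)|$ is continuous and $1$-periodic, its range is a compact interval missing $1$, so either $|c(t)|<1$ for all $t$ or $|c(t)|>1$ for all $t$; in the first case $\|c(\cdot)S_\sigma\|_{{\cal L}(\CC_{per})}<1$, and in the second case $c(\cdot)S_\sigma$ is boundedly invertible with inverse of norm $<1$, so that $I-c(\cdot)S_\sigma=-c(\cdot)S_\sigma\big(I-(c(\cdot)S_\sigma)^{-1}\big)$; in either case $I-c(\cdot)S_\sigma$ is invertible by a Neumann series, whence $I-{\cal R}_0$ is invertible (were $1$ in the spectrum of ${\cal R}_0$, it would be in that of ${\cal R}_0^2$). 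For ${\cal K}$: after transporting the data along characteristics, every term of ${\cal K}$ is an operator of the form $p\mapsto\int_I K(t,s)\,p(t-s)\,ds$ over a fixed interval $I$ with continuous kernel $K$ --- this structure being available exactly because such a term contains a ``transverse jump'' between the two characteristic families, and the transversality hypothesis $a_1(x)\neq a_2(x)$ makes the corresponding change of variables (from integration in $x$ along one family to integration in the time-shift $s$) a diffeomorphism. Operators of that form map bounded subsets of $\CC_{per}$ to equicontinuous ones (translate $p$ by the change of variables and invoke uniform continuity of $K$), hence are compact by the Arzel\`a--Ascoli theorem; summing the geometrically convergent series, ${\cal K}$ is compact. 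Therefore $I-{\cal R}=(I-{\cal R}_0)\big(I-(I-{\cal R}_0)^{-1}{\cal K}\big)$ is Fredholm of index zero, and the lemma follows.

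The step I expect to be the main obstacle is the compactness of ${\cal K}$. It is essential that the reduction has brought the problem down to functions of $t$ alone: at the level of $\CC$, multiplication by the off-diagonal coefficient $N$ is \emph{not} a compact perturbation (elements of $\D(\bar A)$ need not be equicontinuous), and it is only for transported traces --- functions of one variable, evaluated at shifted times $t-s$ and integrated in $s$ --- that the ``transport-and-integrate'' operators become compact, and only there that the hypothesis $a_1\neq a_2$ is genuinely used. The other point requiring care, though routine, is to make the identification of $\D(\bar A)\cap\Cb$ with $\{(p,g):(I-{\cal R})p={\cal S}g\}$ precise and to verify that it preserves the Fredholm property and the index.
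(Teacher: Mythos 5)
There is a genuine gap at the very first step of your reduction: the claim that, for the \emph{full} linearization (off\--diagonal coupling included), prescribing the two endpoint traces $p$ and the right\--hand side $g$ determines a unique $v\in\D(\bar A)$ ``by Picard iteration for the resulting Volterra system in $x$''. The system of integral equations you obtain is not of Volterra type with respect to any single ordering of $x$: the first component is integrated from $x=0$ rightwards and the second from $x=1$ leftwards, so the iterated kernels do not gain factors $x^n/n!$ and the series need not converge; worse, the solution operator you postulate simply does not exist in general. Concretely, take $a_1\equiv 1$, $a_2\equiv -1$, $f_1(x,u)=\tfrac{\pi}{2}u_2$, $f_2(x,u)=\tfrac{\pi}{2}u_1$ (so the diagonal derivatives $\partial_{u_1}f_1,\partial_{u_2}f_2$ vanish and \reff{nonressys} holds whenever $|r_1r_2|\neq 1$, i.e.\ the hypotheses of the lemma are not excluded). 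Then $v_1(t,x)=\sin(\pi x/2)$, $v_2(t,x)=\cos(\pi x/2)$ solves the linearized equations with $g=0$ and has vanishing traces $v_1(\cdot,0)=v_2(\cdot,1)=0$: the trace\--data problem has a nontrivial kernel, so the identification of $\D(\bar A)$ with $\CC_{per}\times\CC$, and with it the equivalence ``$\bar A-F'(u)$ Fredholm of index zero iff $I-{\cal R}$ is'', collapses. Note that the non\--resonance conditions \reff{nonressys}, \reff{nonressys1} only control the \emph{diagonal} coefficients, so they cannot rescue the convergence of a series in which the off\--diagonal part enters with arbitrary size; making the two\--point\--in\--$x$ problem Fredholm is essentially the original lemma, so the reduction as stated begs the question.

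The paper avoids exactly this trap: it never solves the coupled interior problem. It inverts only the decoupled diagonal transport $\bar A-B(u)$ explicitly (Lemma \ref{Invert} and Corollary \ref{iso}; this is where one of \reff{nonressys}, \reff{nonressys1} is used, via a weighted\--shift argument on $C_{per}(\R)$ very much like your analysis of $I-{\cal R}_0$), and then treats the off\--diagonal part $\widetilde B(u)$ as a perturbation which, as you yourself observe, is \emph{not} compact on $\CC$. The key device is Nikolskii's criterion: $I-K$ is Fredholm of index zero as soon as $K^2$ is compact, applied to $K=(I-C(u))^{-1}D(u)\widetilde B(u)$; compactness of $K^2$ is reduced to compactness of $D\widetilde B D$ and $D\widetilde B C$, which is where the transversality $a_1(x)\neq a_2(x)$ and Arzel\`a--Ascoli enter, exactly the ``two transversal jumps produce a genuinely two\--dimensional integration'' mechanism you identified. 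So your instinct about where compactness comes from is the right one, but to make it work you should follow the paper's order of operations (invert the diagonal part first, then quote Nikolskii) rather than attempt a trace parametrization of the fully coupled linearized system.
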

 
Now we prepare the proof of Lemma \ref{Fred}, which will be done in Subsection \ref{Fredholm}. As
in the proof of Lemma \ref{closable},  we will use  integration along characteristics.

For any given $u \in \CC$, we introduce linear bounded operators $B(u), C(u), D(u):\CC \to \CC$ by
$$
[B(u)v](t,x):=\left[
\begin{array}{c}
\partial_{u_1}f_1(x,u(t,x))v_1(t,x)\\
\partial_{u_2}f_2(x,u(t,x))v_2(t,x)
\end{array}
\right],
$$
$$
[C(u)v](t,x):=\left[
\begin{array}{c}
\displaystyle r_2 v_2(t+\al_1(x,0),0)
\exp\left(\int_0^x\frac{\partial_{u_1}f_1(z,u(t+\al_1(x,z),z))}{a_1(z)}\,dz\right) \\
\displaystyle r_1v_1(t+\al_2(x,1),1)  \exp\left(-\int_x^1\frac{\partial_{u_2}f_2(z,u(t+\al_2(x,z),z))}{a_2(z)}\,dz\right) 
\end{array}
\right],
$$
and
$$%\begin{eqnarray*}
[D(u)v](t,x):=\left[
\begin{array}{c}
	\displaystyle
\int_0^x\frac{v_1(t+\al_1(x,y),y)}{a_1(y)}
\exp\left(\int_y^x\frac{\partial_{u_1}f_1(z,u(t+\al_1(x,z),z))}{a_1(z)}\,dz\right) 
dy,\\
\displaystyle-\int_x^1\frac{v_2(t+\al_2(x,y),y)}{a_2(y)}
\exp\left(\int_y^x\frac{\partial_{u_2}f_2(z,u(t+\al_2(x,z),z))}{a_2(z)}\,dz\right) 
dy\end{array}
\right]
$$%\end{eqnarray*}

\begin{lemma}
\label{BCD}
(i) For all $u,v \in \CC$, we have $C(u)v \in \D(\bar A)$ and 
$(\bar A-B(u))C(u)v=0$.

(ii) For all $u,v \in \CC$, we have $D(u)v \in \D(\bar A)$ and 
$(\bar A-B(u))D(u)v=v$.

(iii) For all $u \in \CC$ and $v \in \D(\bar A) \cap \Cb$, we have
$D(u)(\bar A-B(u))v=v-C(u)v$.

(iv)  If for functions $u,v \in \CC$ the partial derivatives $\partial_tu$ and $\partial_tv$ exist and are continuous, then the functions $C(u)v$ and $D(u)v$ are $C^1$-smooth.

(v)  If the functions $a_j$ 
and $f_j$, $j=1,2$, are $C^\infty$-smooth,
then for any $k \in \N$ the following is true: 
If for functions $u,v \in \CC$ all partial derivatives $\partial^l_t\partial_x^mu$ and $\partial^l_t\partial_x^mv$, with $l \in \N$ and
$m\le k$, exist and are continuous, then the partial derivatives $\partial^l_t\partial_x^m
C(u)v$ and $\partial^l_t\partial_x^mD(u)v$, with $l \in \N$ and
$m\le k+1$, exist and are continuous. 
\end{lemma}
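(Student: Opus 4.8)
The plan is to prove all five assertions by the method of characteristics. Along the $j$-th characteristic the operator $A-B(u)$ becomes a scalar linear equation, and $B(u)$, $C(u)$, $D(u)$ are exactly what one reads off from its variation-of-parameters (Duhamel) formula: $C(u)v$ is the homogeneous solution carrying the endpoint value of $v$ (dressed with the reflection coefficient), and $D(u)v$ is the Duhamel integral with zero boundary contribution. Concretely, for fixed $(t_0,x_0)$ the function $\gamma(x):=w_j(t_0+\al_j(x_0,x),x)$ satisfies $\gamma'(x)=a_j(x)^{-1}\bigl[(\partial_tw_j+a_j\partial_xw_j)(t_0+\al_j(x_0,x),x)\bigr]$; using the cocycle identity $\al_j(x_0,x)+\al_j(x,z)=\al_j(x_0,z)$ one checks that $C(u)v$ solves $\gamma'=p\gamma$ and $D(u)v$ solves $\gamma'=p\gamma+q$, with $p(x)=a_j(x)^{-1}\partial_{u_j}f_j(x,u(t_0+\al_j(x_0,x),x))$ and the obvious $q$. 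This is the computational core of parts $(i)$--$(iii)$.

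For $(i)$ and $(ii)$, the scalar linear equation above shows that when $u,v$ are $C^1$ the identities $(A-B(u))C(u)v=0$ and $(A-B(u))D(u)v=v$ hold classically. For general $u,v\in\CC$ I would approximate: take $u^n\in\CC^1$ with $u^n\to u$ in $\|\cdot\|_\infty$ and replace the boundary traces $v_2(\cdot,0)$ and $v_1(\cdot,1)$ occurring in the formulas by $C^\infty$, $1$-periodic functions converging to them uniformly. The functions $w^n$ built by the same formulas with $u^n$ and the smoothed traces lie in $\CC^1$ --- here one uses only $a_j\in C([0,1])$, so that $\al_j$ is $C^1$, together with $\partial_{u_1}^k\partial_{u_2}^lf_j\in C$ --- and they satisfy $w^n\to C(u)v$ and $Aw^n=B(u^n)w^n\to B(u)C(u)v$ uniformly. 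By the definition of the closure (cf. \reff{closure}) this yields $C(u)v\in\D(\bar A)$ and $(\bar A-B(u))C(u)v=0$, and likewise $D(u)v\in\D(\bar A)$ with $(\bar A-B(u))D(u)v=v$.

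For $(iii)$ I would first verify the identity for $v\in\Cb^1$: the Duhamel representation of each $v_j$ along its characteristic, once the boundary conditions $v_1(t,0)=r_1v_2(t,0)$ and $v_2(t,1)=r_2v_1(t,1)$ are inserted into the homogeneous part, reads precisely $v=C(u)v+D(u)\bigl(Av-B(u)v\bigr)$. One then extends this to $v\in\D(\bar A)\cap\Cb$ by continuity, since $v\mapsto C(u)v$ factors through the $\|\cdot\|_\infty$-continuous boundary trace and $v\mapsto D(u)(\bar A v-B(u)v)$ is continuous in the graph norm. The only point that really needs care is the density of $\Cb^1$ in $\D(\bar A)\cap\Cb$ for the graph norm; one obtains this by smoothing the boundary trace of $v$ and propagating it along characteristics (as in the previous paragraph) and then correcting by $D(u)$ applied to a mollification in $t$ of $\bar A v-B(u)v$, checking that the corrected family remains in $\Cb$ in the limit.

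Finally, $(iv)$ and $(v)$ follow by differentiating the explicit integral formulas, and the mechanism behind the gain of exactly one $x$-derivative is this. The characteristic flow acts on $t$ only, so $\partial_t$ passes through every shift $\phi\mapsto\phi(\,\cdot+\al_j(x,y))$; hence $\partial_t^l$ applied to $C(u)v$ or $D(u)v$ is again of the same form with $u,v$ replaced by $\partial_t^lu$, $\partial_t^lv$, which settles the $\partial_t$-part of $(iv)$ and the role of ``$l\in\N$'' in $(v)$. Whenever $\partial_x$ falls on $u$ or $v$ composed with a shift $t\mapsto t+\al_j(x,z)$ it produces, by the chain rule, only $\partial_t$-derivatives of $u,v$ times powers of $a_j^{-1}$, never an $x$-derivative of the data; the single channel through which a genuine $\partial_xu$ or $\partial_xv$ enters is the diagonal term created when $\partial_x$ hits the upper limit of an integral $\int_0^x(\cdots)dz$ or $\int_y^x(\cdots)dz$, where the shift is evaluated at coinciding points and collapses to $u(t,x)$ or $v(t,x)$ with no shift, so differentiating that term $m$ further times costs $\partial_x^m$ of $u,v$. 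Since one $x$-differentiation of $C(u)v$ or $D(u)v$ creates at most one such diagonal term, starting from data with $\partial_t^l\partial_x^m$ for $m\le k$ one obtains $\partial_t^l\partial_x^m$ for $m\le k+1$. In $(iv)$ (the case $k=0$) only $a_j\in C([0,1])$ and $\partial_{u_1}^k\partial_{u_2}^lf_j\in C$ are needed, while $(v)$ uses the $C^\infty$-smoothness of $a_j$ and $f_j$ to differentiate $\al_j$ and $f_j$ repeatedly. I expect this last bookkeeping --- an induction on $k$ organized around the Leibniz rule and the Fa\`a di Bruno formula, checking that no term ever demands $\partial_x^{k+2}$ of $u$ or $v$ --- to be the most laborious step, even though the idea (characteristic shifts turn $x$-derivatives into $t$-derivatives, and the integration limits leak $x$-derivatives one at a time) is all there is to it.
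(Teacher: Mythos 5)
Parts (i), (ii), (iv) and (v) of your proposal are essentially the paper's own argument: smooth approximation of $u$ and of the data entering $C(u)v$, $D(u)v$, the classical identities $(A-B(u^n))C(u^n)v^n=0$ and $(A-B(u^n))D(u^n)v^n=v^n$ obtained by differentiating along characteristics, and passage to the limit via the definition \reff{closure} of the closure; your bookkeeping for (iv)--(v) (shifts convert $x$-derivatives of the data into $t$-derivatives, and only the diagonal terms coming from the integration limits consume genuine $x$-derivatives, one order being gained by the outer integration) is exactly the mechanism the paper invokes when it reduces everything to the functions \reff{v2}, \reff{f1} and \reff{f1a}.

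The problematic point is your treatment of (iii). You reduce it to the density of $\Cb^1$ in $\D(\bar A)\cap\Cb$ with respect to the graph norm, and this is precisely the step you do not actually prove: the construction you sketch (propagate a smoothed boundary trace along characteristics and correct by $D(u)$ applied to a $t$-mollification of $\bar Av-B(u)v$) produces an approximant of the form $C\mbox{-type term}+D(u)g^n$, and to show that it converges to $v$ in the sup-norm you would already need the representation $v=C(u)v+D(u)(\bar Av-B(u)v)$, i.e. assertion (iii) itself; moreover it is not clear that each approximant lies in $\Cb^1$ rather than only its limit in $\Cb$. So, as written, this supporting lemma is either circular or missing. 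The paper avoids the issue entirely: by the definition of $\D(\bar A)$ one may take $v^n\in\CC^1$ (with \emph{no} boundary condition imposed) such that $\|v^n-v\|_\infty+\|Av^n-\bar Av\|_\infty\to0$, and the fundamental theorem of calculus along the characteristic gives
$v^n_1(t,x)-[C_1(u)v^n](t,x)=[D_1(u)(A-B(u))v^n](t,x)+\bigl(v^n_1-r\,v^n_2\bigr)(t+\al_1(x,0),0)\exp(\cdots)$,
with the boundary-mismatch term written out explicitly; letting $n\to\infty$, this term vanishes because the limit $v$ lies in $\Cb$, and both remaining sides converge by graph-norm continuity. If you want to keep your route, you would have to prove the density statement independently (e.g. by mollification in $t$, which preserves $\Cb$ and commutes with $\bar A$, plus an argument that the mollified functions are in fact $C^1$ in $x$), but the paper's device of allowing non-boundary-compatible approximants and tracking the mismatch term is both shorter and closer to what the definition of $\D(\bar A)$ hands you for free.
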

\begin{proof} $(i)$ Let $u,v \in \CC$ be given. We have to show that there exists a sequence $w^1,w^2,\ldots \in \CC$
such that
\be
\label{wconv}
\|w^n- C(u)v\|_\infty \to 0 \mbox{ for } n \to \infty
\ee
and
\be
\label{Awconv}
\|Aw^n- B(u)C(u)v\|_\infty \to 0 \mbox{ for } n \to \infty.
\ee
We construct this sequence as follows:
Because of $\CC^1$ is dense in $\CC$, there exist
sequences $u^1,u^2,\ldots \in \CC^1$ and
$v^1,v^2,\ldots \in \CC^1$
such that 
\be 
\label{uvconv}
\|u^n-u\|_\infty+ \|v^n-v\|_\infty\to 0 \mbox{ for } n \to \infty.
\ee
Therefore, we can choose $w^n:=C(u^n)v^n$. Then
\reff{wconv} is satisfied. It remains to prove \reff{Awconv}. For that reason we calculate 
\begin{eqnarray*}
%\label{Cproof}
&&[A_1w^n](t,x)\\
&&=(\partial_t+a_1(x)\partial_x)\left( r_2 v^n_2(t+\al_1(x,0),0)
\exp\left(\int_0^x\frac{\partial_{u_1}f_1(z,u^n(t+\al_1(x,z),z))}{a_1(z)}\,dz\right)\right)\\
&&=r_2 v^n_2(t+\al_1(x,0),0)
\exp\left(\int_0^x\frac{\partial_{u_1}f_1(z,u^n(t+\al_1(x,z),z))}{a_1(z)}\,dz\right)\partial_{u_1}f_1(x,u^n(t,x))\\
&&=\left[B_1(u^n)C(u^n)v^n\right](t,x).
\end{eqnarray*}
Similarly one shows that $A_2w^n=B_2(u^n)C(u^n)v^n$, i.e. $Aw^n=B(u^n)C(u^n)v^n$. This implies~\reff{Awconv}.

$(ii)$ Similar to $(i)$, take sequences $u^1,u^2,\ldots \in \CC^1$ and
$v^1,v^2,\ldots \in \CC^1$
with \reff{uvconv}, and set $w^n:=D(u^n)v^n$.
Then
\begin{eqnarray*}
&&[A_1w^n](t,x)\\
&&=(\partial_t+a_1(x)\partial_x)
\displaystyle\int_0^x\frac{v_1^n(t+\al_1(x,y),y)}{a_1(y)}
\exp\left(-\int_x^y\frac{\partial_{u_1}f_1(z,u^n(t+\al_1(x,z),z))}{a_1(z)}\,dz\right) 
dy\\
&&=v^n_1(t,x)+[B_1(u^n)D(u^n)v^n](t,x).
\end{eqnarray*}
Similarly one shows that $A_2w^n=B_2(u^n)D(u^n)v^n$, i.e.
$Aw^n=B(u^n)D(u^n)v^n$. Hence,
$$
\|Aw^n-B(u)D(u)v\|_\infty \to 0 \mbox{ for } n \to \infty.
$$

$(iii)$  Let $u \in \CC$ and $v \in \D(\bar A) \cap \Cb$ be given. Take a sequence $v^1,v^2,\ldots \in \CC^1$ such that $\|v^n-v\|_\infty 
+\|Av^n-\bar A v\|_\infty 
\to 0$ for $n \to \infty$. Then
\begin{eqnarray*}
&&v^n_1(t,x)-[C_1(u)v^n](t,x)\\
&&=(v^n_1(t+\al_1(x,0),0)-r_2v^n_2(t+\al_1(x,0),0))
\exp\left(\int_0^x\frac{\partial_{u_1}f_1(z,u(t+\al_1(x,z),z))}{a_1(z)}\,dz\right)\\
%&&=v^n_1(t,x)-[C_1(u)v^n](t,x)\\
&&=\int_0^x\partial_y\left(
v^n_1(t+\al_1(x,y),y)
\exp\left(\int_y^x\frac{\partial_{u_1}f_1(z,u(t+\al_1(x,z),z))}{a_1(z)}\,dz\right)\right)dy\\
%&&=\int_0^x\left(\frac{\partial_t v^n_1             %(t+\al_1(x,y),y)}{a_1(y)}+\partial_x v^n_1              %(t+\al_1(x,y),y)\right)\exp\left(\int_x^y\frac{\partial_{u_1}f_1(z,u(t+\al_1(x,z),z))}{a_1(z)}\,dz\right)dy\\
&&=[D_1(A-B(u))v^n](t,x).
\end{eqnarray*}
Taking the limit as $n \to \infty$ and using the boundary condition for $v$ in $x=0$, we get
$v_1-C_1(u)v=D_1(\bar A-B(u))v$. Similarly one shows that $v_2-C_2(u)v=D_2(\bar A-B(u))v$.

$(iv)$ This assertion follows directly from the definitions of the operators $C(u)$ and $D(u)$.

$(v)$ Suppose that the functions $a_j$ and $f_j$ with $j=1,2$ are $C^\infty$-smooth. Take an integer $k \in \N$ and functions $u,v \in \CC$ such that all partial derivatives $\partial^l_t\partial_x^mu$ and $\partial^l_t\partial_x^mv$, for $l \in \N$ and
$m\le k$, exist and are continuous.

In order to show that all 
partial derivatives $\partial^l_t\partial_x^m
C_1(u)v$, for $l \in \N$ and
$m\le k+1$, exist and are continuous (and similarly for all 
partial derivatives $\partial^l_t\partial_x^m C_2(u)v$),
it suffices to show that  all 
partial derivatives $\partial^l_t\partial_x^m$, for $l \in \N$ and
$m\le k+1$, of the functions
\be 
\label{v2}
(t,x) \in \R \times [0,1] \mapsto v_2(t+\al_1(x,0),0) \in \R 
\ee
and 
\be 
\label{f1}
(t,x) \in \R \times [0,1] \mapsto \int_0^x \frac{\partial_{u_1}f_1(z,u(t+\al_1(x,z),z))}{a_1(z)}\,dz
\in \R 
\ee
exist and are continuous. This is obvious
for \reff{v2}, and for \reff{f1} it follows from the fact that all 
partial derivatives $\partial^l_t\partial_x^m$, for $l \in \N$ and
$m\le k$, of the functions
\be 
\label{f1a}
(t,x) \in \R \times [0,1] \mapsto \frac{\partial_{u_1}f_1(x,u(t,x))}{a_1(z)}\,dz
\in \R 
\ee
exist and are continuous.

The claim  that all 
partial derivatives $\partial^l_t\partial_x^m
D_1(u)v$, for $l \in \N$ and
$m\le k+1$, exist and are continuous (and similar for $\partial^l_t\partial_x^m
D_2(u)v$) follows from \reff{f1a} and the the obvious fact that  all 
partial derivatives $\partial^l_t\partial_x^m$, for $l \in \N$ and
$m\le k$, of the functions
$$
(t,x) \in \R \times [0,1] \mapsto \frac{v_1(t,x)}{a_1(x)} \in \R 
$$
exist and are continuous.

\end{proof}

\begin{lemma}
\label{Invert}
Let a function $u \in \CC$ satisfy one of the conditions \reff{nonressys}
and \reff{nonressys1}. Then the operator $I-C(u)$ is bijective from $\CC$ to $\CC$.
\end{lemma}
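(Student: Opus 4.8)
The key observation is that $[C(u)v](t,x)$ depends on $v$ only through the two boundary traces $v_1(\cdot,1)$ and $v_2(\cdot,0)$, which are continuous $1$-periodic functions of $t$. Accordingly, I would factor $C(u)=\iota_u\circ R$, where $\co$ denotes the space of continuous $1$-periodic functions $\R\to\R$ with the maximum norm, $R:\CC\to\co\times\co$ is the trace map $Rv:=(v_1(\cdot,1),v_2(\cdot,0))$, and $\iota_u:\co\times\co\to\CC$ is the map obtained by substituting $(\phi_1,\phi_2)$ for $(v_1(\cdot,1),v_2(\cdot,0))$ in the defining formula for $C(u)$. The first step is to reduce the equation $(I-C(u))v=g$ to a problem on $\co\times\co$: if $I-R\iota_u$ is bijective, then for each $g\in\CC$ the element $v:=g+\iota_u\psi$ with $\psi:=(I-R\iota_u)^{-1}(Rg)$ satisfies $Rv=Rg+R\iota_u\psi=\psi$, hence $v=g+\iota_u Rv=g+C(u)v$; and if $(I-C(u))w=0$ then $w=\iota_u Rw$, so $(I-R\iota_u)(Rw)=0$, whence $Rw=0$ and $w=\iota_u(Rw)=0$. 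Thus it suffices to show that $I-R\iota_u$ is bijective on $\co\times\co$.

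Next I would compute $R\iota_u$. By construction it is off-diagonal, $R\iota_u(\phi_1,\phi_2)=(\Gamma_1\phi_2,\Gamma_2\phi_1)$, where $\Gamma_1,\Gamma_2:\co\to\co$ are the weighted shift operators produced by the two rows of $C(u)$ evaluated at $x=1$ and at $x=0$; for example, $(\Gamma_1\phi)(t)=r_2\,\phi(t+\al_1(1,0))\exp\big(\int_0^1\frac{\partial_{u_1}f_1(z,u(t+\al_1(1,z),z))}{a_1(z)}\,dz\big)$. Eliminating the second component from $(I-R\iota_u)(\psi_1,\psi_2)=(h_1,h_2)$ turns the system into the single equation $(I-\Gamma_1\Gamma_2)\psi_1=h_1+\Gamma_1 h_2$, with $\psi_2=h_2+\Gamma_2\psi_1$, and this passage is reversible; hence bijectivity of $I-R\iota_u$ follows from that of $I-\Gamma_1\Gamma_2$ on $\co$. (If the hypothesis available is \reff{nonressys1} rather than \reff{nonressys}, one eliminates the first component instead and is reduced, symmetrically, to $I-\Gamma_2\Gamma_1$.)

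The operator $\Gamma_1\Gamma_2$ is again a weighted shift, $(\Gamma_1\Gamma_2\psi)(t)=\kappa(t)\,\psi(t+\tau)$, with a fixed shift $\tau=\al_1(1,0)+\al_2(0,1)$ and a continuous $1$-periodic weight $\kappa$ for which $|\kappa(t)|=|r_1r_2|\exp\big(\int_0^1(\cdots)\,dz\big)$, where the integrand is the combination of $\partial_{u_1}f_1/a_1$ and $\partial_{u_2}f_2/a_2$ evaluated at points lying on the characteristics through $(t,\cdot)$. A change of the integration variable (using $\al_j(1,z)=-\al_j(z,1)$) together with a translation of the time variable identifies this quantity with the one appearing in \reff{nonressys}: concretely, \reff{nonressys} is equivalent to $|\kappa(t)|\neq1$ for all $t\in\R$, and \reff{nonressys1} is equivalent to the weight of $\Gamma_2\Gamma_1$ being nowhere of modulus $1$. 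Carrying out this identification precisely --- matching the operator weight with the non-resonance integrals exactly as written --- is the one genuinely computational step of the argument, and it is where I expect the main difficulty to lie.

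Granting this, the conclusion follows quickly. The function $t\mapsto|\kappa(t)|$ is continuous, $1$-periodic and, by the previous paragraph, never equal to $1$, so by the intermediate value theorem either $\max_t|\kappa(t)|<1$ or $\min_t|\kappa(t)|>1$. In the first case the norm of $\Gamma_1\Gamma_2$ as an operator on $\co$ equals $\max_t|\kappa(t)|<1$, so $I-\Gamma_1\Gamma_2$ is boundedly invertible by the Neumann series. In the second case $\kappa$ vanishes nowhere, so $\Gamma_1\Gamma_2$ is boundedly invertible, its inverse being the weighted shift $\psi\mapsto\kappa(\cdot-\tau)^{-1}\psi(\cdot-\tau)$, whose norm is $1/\min_t|\kappa(t)|<1$; consequently $I-\Gamma_1\Gamma_2=-(\Gamma_1\Gamma_2)\big(I-(\Gamma_1\Gamma_2)^{-1}\big)$ is a composition of two bijections. (The degenerate case $r_1r_2=0$, in which $\kappa\equiv0$ and $\Gamma_1\Gamma_2=0$, is subsumed in the first case.) Reading the chain of reductions backwards now yields that $I-C(u)$ is bijective from $\CC$ onto $\CC$.
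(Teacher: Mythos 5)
Your argument is correct and is essentially the paper's proof in a slightly more abstract packaging: like the paper, you reduce $(I-C(u))v=g$ to a scalar equation on $C_{per}(\R)$ for a boundary trace, observe that the resulting operator is a weighted shift whose weight has modulus $|r_1r_2|\exp\bigl(\int_0^1(\cdots)\,dx\bigr)$, use the non-resonance condition plus continuity and periodicity to split into the cases $\max|\kappa|<1$ and $\min|\kappa|>1$, and invert by a Neumann series (respectively after first inverting the shift), with \reff{nonressys1} handled symmetrically via the other trace. The only step you leave schematic --- matching $|\kappa(t)|$ with the integrals in \reff{nonressys}, \reff{nonressys1} via the change of variables along characteristics and a time translation --- is precisely the computation the paper carries out explicitly, and your description of it is accurate.
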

\begin{proof} 
Let $u,f \in \CC$. Assume that $u$
satisfies one of the conditions \reff{nonressys}
and \reff{nonressys1}.
We have to show that  there exists a unique solution $v \in \CC$ to the equation
  \be
  \label{1a}
  v=C(u)v+f.
  \ee
  For $t \in \R$, $x,y \in [0,1]$, and $j=1,2$, set 
\be
\label{cdef}
  c_j(t,x,y):=\exp\left(\int_y^x\frac{\partial_{u_j}f_j(z,u(t+\al_j(x,z),z))}{a_j(z)}\,dz\right). 
  \ee
Equation \reff{1a} is satisfied if and only if for 
  all $t \in \R$ and $x \in [0,1]$ we have
  \begin{eqnarray}
    \label{2a}
    v_1(t,x)&=&r_1c_1(t,x,0)v_2(t+\al_1(x,0),0)+f_1(t,x),\\
     \label{3a}
    v_2(t,x)&=&r_2c_2(t,x,1)v_1(t+\al_2(x,1),1)+f_2(t,x).
    \end{eqnarray}
 System \reff{2a}--\reff{3a} is satisfied if and only 
 if \reff{2a} is true and if 
  \begin{eqnarray}
     \label{4a}
    v_2(t,x)&=&r_1r_2c_1(t+\al_2(x,1),1,0)
    c_2(t,x,1)v_2(t+\al_1(1,0)+\al_2(x,1),0)
    \nonumber\\
    &&+r_2c_2(t,x,1)f_1(t+\al_2(x,1),1)+
    f_2(t,x).
  \end{eqnarray}
 Put $x=0$ in \reff{4a} and get
 % i.e. if and only if  \reff{2a} and \reff{4a} are true and if
   \begin{eqnarray}
     \label{5a}
    v_2(t,0)&=&r_1r_2c_1(t+\al_2(0,1),1,0)
    c_2(t,0,1)u_2(t+\al_1(1,0)+\al_2(0,1),0)
    \nonumber\\
    &&+r_2c_2(t,0,1)f_1(t+\al_2(0,1),1)+f_2(t,0).
   \end{eqnarray}
Similarly,  system \reff{2a}--\reff{3a} 
is satisfied if and only 
 if \reff{3a} is true and 
  \begin{eqnarray}
     \label{4b}
    v_1(t,x)&=&r_1r_2c_2(t+\al_1(x,0),0,1)
    c_1(t,x,0)v_1(t+\al_2(0,1)+\al_1(x,0),0)
    \nonumber\\
    &&+r_2c_2(t+\al_1(x,0),0,1)f_2(t+\al_1(x,0),0)+
    f_1(t,x),
  \end{eqnarray}
  i.e. if and only if  \reff{3a} and \reff{4b} are true and 
   \begin{eqnarray}
     \label{5b}
   v_1(t,1)&=&r_1r_2c_2(t+\al_1(1,0),0,1)
  c_1(t,1,0)v_1(t+\al_2(0,1)+\al_1(1,0),0)
  \nonumber\\
  &&+r_2c_2(t+\al_1(1,0),0,1)f_2(t+\al_1(1,0),0)+
  f_1(t,1).
   \end{eqnarray}
   
Let us consider equation \reff{5a}. It is a functional equation for the unknown function $v_2(\cdot,0)$. In order to solve it, let us denote
   by $C_{per}(\R)$ the Banach space of all $1$-periodic continuous functions $\tilde{v}:\R \to\R$ with the norm
   $\|\tilde{v}\|_\infty:=\max\{|\tilde{v}(t)|: \; t \in \R\}$.  Equation \reff{5a} is an equation in  $C_{per}(\R)$ of the type
   \be
   \label{6a}
   (I-\widetilde{C})\tilde{v}=\tilde{f}
   \ee
   with $\tilde{v},\tilde{f}\in C_{per}(\R)$ defined by $\tilde{v}(t):=v_2(t,0)$ and
   $$
   \tilde{f}(t):=r_2c_2(t,0,1)
   f_1(t+\al_2(0,1),1)+f_2(t,0)
   $$
   and with $\widetilde{C}\in {\cal L}(C_{per}(\R))$ defined by
   $$
   [\widetilde{C}\tilde{v}](t):=
 r_1r_2c_1(t+\al_2(0,1),1,0)
    c_2(t,0,1)\tilde{v}(t+\al_1(1,0)+\al_2(0,1)).
   $$
   From the definitions of the functions $c_1$ 
   and $c_2$ it follows that
   \begin{eqnarray*}
   &&c_1(t+\al_2(0,1),1,0)c_2(t,0,1)\\
   &&=\exp \int_0^1\left( \frac{\partial_{u_1}f_1(t+\al_1(1,x)+\al_2(0,1),x)}{a_1(x)}-\frac{\partial_{u_2}f_2(t+\al_2(0,x),x)}{a_2(x)}
\right)\,dx
\end{eqnarray*}
 and, hence,
\begin{eqnarray*}
   &&c_1(s+\al_2(0,1),1,0)
    c_2(s,0,1)|_{s=t-\al_2(0,1)}\\
   &&=\exp \int_0^1\left(\frac{\partial_{u_1}f_1(t-\al_1(x,1),x)}{a_1(x)}-\frac{\partial_{u_2}f_2(t-\al_2(x,1),x)}{a_2(x)}
 \right)\,dx,  
\end{eqnarray*}
Consequently, if assumption \reff{nonressys} is satisfied, then
$|r_1r_2c_1(t+\al_2(0,1),1,0)
    c_2(t,0,1)|\not=1$ for all $t \in \R$.

First, let us consider the case that 
$$
c_+:=\max\{|r_1r_2c_1(t+\al_2(0,1),1,0)
    c_2(t,0,1)|:\; t \in \R\}<1.
$$
 Then 
   $$
   \|\widetilde{C}\|_{ {\cal L}(C_{per}(\R))}\le \frac{1+c_+}{2}<1.
   $$
   Hence, the operator $I-\widetilde{C}$ is an isomorphism from 
   $C_{per}(\R)$ to itself.
   Therefore, there exists a unique solution $v_2(\cdot,0)\in C_{per}(\R)$.
   Inserting this solution into the right-hand sides of \reff{4a} 
   %we get $u_2$,    and  inserting this  into the right-hand side of 
   and \reff{2a}, we get the unique solution $v=(v_1,v_2) \in \CC$ to \reff{2a}--\reff{3a}.
  
    Now, let us consider the case that 
 $$
c_-:=\min\{|r_1r_2c_1(t+\al_2(0,1),1,0)
    c_2(t,0,1)|:\; t \in \R\}>1.
$$   
Then 
   $$|r_1r_2c_1(t+\al_2(0,1),1,0)
    c_2(t,0,1)|
   \ge \frac{1+c_-}{2}\ge 1.$$ 
Equation \reff{5a} is equivalent to
 \begin{eqnarray*}
      v_2(t,0)&=&
    \frac{v_2(t-\al_1(1,0)-\al_2(0,1),0)}
    {r_1r_2c_1(t-\al_1(1,0),1,0)
    c_2(t-\al_1(1,0)-\al_2(0,1),0,1)}\\
    &&-\frac{f_1(t-\al_1(1,0),0)}{r_1c_1(t-\al_1(1,0)-\al_2(0,1),1,0)}\\
    &&-\frac{f_2(t-\al_1(1,0)-\al_2(0,1),0)}{r_1r_2c_1(t-\al_1(1,0),1,0)
    c_2(t-\al_1(1,0)-\al_2(0,1),0,1)}.
 \end{eqnarray*}
 This equation is of the type \reff{6a} again, but now with 
 $$
 \|\widetilde{C}\|_{ {\cal L}(C_{per}(\R))}\le\frac{2}{1+c_-}<1.
 $$
Hence, we can proceed as above.

Similarly one deals with the case if condition \reff{nonressys1} is satisfied.  Then equation \reff{5b} is uniquely solvable, and so is equation \reff{4b} and, hence, system \reff{2a}--\reff{3a}.\end{proof}

\begin{corollary}
\label{iso}
Let $u \in \CC$ satisfy one of the conditions
\reff{nonressys} and \reff{nonressys1}. Then the operator $\bar A-B(u)$ is bijective from $\D(\bar A)\cap \Cb$ (equipped with the operator norm) to $\CC$, and
\be
\label{invformula}
(\bar A-B(u))^{-1}v=(I-C(u))^{-1}D(u)v
\mbox{ for all } v \in \CC.
\ee
\end{corollary}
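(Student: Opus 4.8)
The plan is to obtain Corollary \ref{iso} as an essentially formal consequence of Lemmas \ref{BCD} and \ref{Invert}: those lemmas already contain all the analytic content (integration along characteristics, the non-resonance condition), so what remains is to assemble them around the fixed-point identity $w = C(u)w + D(u)v$.

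For injectivity, I would take $v \in \D(\bar A)\cap\Cb$ with $(\bar A - B(u))v = 0$. Then Lemma \ref{BCD}(iii) gives $v - C(u)v = D(u)(\bar A - B(u))v = D(u)0 = 0$, i.e.\ $(I - C(u))v = 0$; since $I - C(u)$ is injective on $\CC$ by Lemma \ref{Invert}, we get $v = 0$, so $\bar A - B(u)$ is injective on $\D(\bar A)\cap\Cb$.

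For surjectivity, and simultaneously for formula \reff{invformula}, fix $v \in \CC$ and set $w := (I - C(u))^{-1}D(u)v$, which is well defined in $\CC$ by Lemma \ref{Invert} (and $(I-C(u))^{-1}$ is bounded by the bounded inverse theorem); by construction $w = C(u)w + D(u)v$. By Lemma \ref{BCD}(i)--(ii) both summands lie in $\D(\bar A)$, which is a linear subspace, hence $w \in \D(\bar A)$, and applying $\bar A - B(u)$ and using (i)--(ii) again yields $(\bar A - B(u))w = 0 + v = v$. To see $w \in \Cb$, I put $x = 0$ in the first component and $x = 1$ in the second component of the identity $w = C(u)w + D(u)v$: the $D(u)v$-terms vanish because the defining integrals reduce to $\int_0^0$ and $\int_1^1$, while in the $C(u)w$-terms the exponential weights become $\exp 0 = 1$ and the argument shifts $\al_1(0,0)$, $\al_2(1,1)$ vanish, so one reads off precisely the two boundary relations defining $\Cb$. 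Hence $w \in \D(\bar A)\cap\Cb$ and $(\bar A - B(u))w = v$; together with injectivity this proves that $\bar A - B(u)$ is bijective and that $(\bar A - B(u))^{-1}v = w = (I - C(u))^{-1}D(u)v$.

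Boundedness of $(\bar A - B(u))^{-1}$ with respect to the graph norm $\|\cdot\|_\infty + \|\bar A\,\cdot\|_\infty$ is then immediate from this formula, since $\|w\|_\infty + \|\bar A w\|_\infty = \|w\|_\infty + \|v\|_\infty$ and $\|w\|_\infty \le \|(I-C(u))^{-1}\|\,\|D(u)\|\,\|v\|_\infty$. I do not anticipate a real obstacle here: the single step requiring genuine checking rather than pure symbol manipulation is that $w$ inherits the boundary conditions, which is where the explicit structure of $C(u)$ and $D(u)$ at $x \in \{0,1\}$ is used; everything else is a direct combination of the lemmas already established, and in fact injectivity and surjectivity are just the two readings of Lemma \ref{BCD}(iii) and of Lemma \ref{BCD}(i)--(ii) applied to the fixed-point identity.
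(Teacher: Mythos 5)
Your argument is correct and is essentially the paper's own proof: injectivity from Lemma \ref{BCD}(iii) plus Lemma \ref{Invert}, and surjectivity together with \reff{invformula} from the fixed point $w=C(u)w+D(u)v$ combined with Lemma \ref{BCD}(i)--(ii), the membership $w\in\Cb$ being read off from the structure of $C(u)$ and $D(u)$ at $x=0$ and $x=1$ exactly as the paper indicates. Only your closing norm remark is slightly off --- since $\bar A w=B(u)w+v$ one has $\|\bar A w\|_\infty\le\|B(u)\|\,\|w\|_\infty+\|v\|_\infty$ rather than $\|\bar A w\|_\infty=\|v\|_\infty$ --- but this concerns boundedness of the inverse, which is not part of the corollary's claim and follows anyway.
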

\begin{proof} To show the injectivity, suppose that $(\bar A-B(u))v=0$ for some $v \in \D(\bar A)\cap \Cb$. Then Lemma \ref{BCD} implies that $(I-C(u))v=0$, and Lemma \ref{Invert} yields $v=0$.

To show the surjectivity and inversion formula \reff{invformula},
take $f \in \CC$. Because of Lemma~\ref{Invert}, there exists  $v \in \CC$ such that
$$
v=C(u)v+D(u)f.
$$
In particular, $v \in \Cb$ (cf. the definitions of the operators $C(u)$ and $D(u)$).
Moreover, Lemma~\ref{BCD} $(i)$ and $(ii)$ yields that $v \in \D(\bar A)$ and
$$
(\bar A-B(u))v= (\bar A-B(u))(C(u)v+D(u)f)
=f.
$$
\end{proof}

\begin{remark}
\label{langua}
\rm
Let us explain where the name ``non-resonance condition'' comes from.

Corollary \ref{iso} claims that,
if $u \in \CC$ satisfies one of the conditions
\reff{nonressys} and \reff{nonressys1}, then for any $g \in \CC$ there exists exactly one solution $v$ to the problem
\be
\label{sysev}
%\left.
\begin{array}{l}
\partial_tv_j(t,x)+a_j(x)\partial_xv_j(t,x)-\partial_{u_j}f_j(x,u(t,x))v_j(t,x)=g_j(t,x), \;j=1,2,\\
v_1(t,0)=r_1v_2(t,0),\;
v_2(t,1)=r_2v_1(t,1),\\
v(t+1,x)=v(t,x).
\end{array}
%\right\}
\ee
Suppose that the function $u$ is independent of time, i.e. $u(t,x)=u(x)$, and let $b_j(x):=
\partial_{u_j}f_j(x,u(x))$. It is easy to calculate that the eigenvalues to the eigenvalue problem
\begin{eqnarray*}
&&a_j(x)v'_j(x)-b_j(x)v_j(x)=\la v_j(x), \;j=1,2,\\
&&v_1(0)=r_1v_2(0),\;
v_2(1)=r_2v_1(1)
\end{eqnarray*}
are
$$
\la_k=\displaystyle\frac{\ln|r_1r_2|-\displaystyle
\int_0^1\left(
\frac{b_2(x)}{a_2(x)}-\frac{b_1(x)}{a_1(x)}
\right)dx}{\displaystyle\int_0^1\left(\frac{1}{a_2(x)}-\frac{1}{a_1(x)}\right)dx}+2k\pi i,\; k \in \Z.
$$
Hence, all eigenvalues have non-vanishing real parts if and only if
$$
\ln|r_1r_2|\not=
\int_0^1\left(
\frac{b_2(x)}{a_2(x)}-\frac{b_1(x)}{a_1(x)}
\right)dx,
$$
and this is just condition \reff{nonressys} or condition \reff{nonressys1}
(in the case that the coefficients $\partial_{u_j}f_j(x,u(t,x))$ are independent of time, \reff{nonressys} and \reff{nonressys1} are the same).
In this case all "internal frequencies" $\la_k/2\pi i$, $k \in \Z$, of system \reff{sysev} are different to all "external frequencies" $k \in \Z$ of the right-hand sides $g_j$, and one says that the external frequencies are not in resonance with the internal frequencies.
 \end{remark}

\subsection{Proof of Lemma \ref{Fred}}
\label{Fredholm}
Suppose that $u \in \CC$ satisfies one of the conditions \reff{nonressys}
and \reff{nonressys1}. Write
$$
\widetilde B (u):=F'(u)-B(u).
$$
We have to show that the operator $\bar A-F'(u)=
\bar A-B(u)-\widetilde B (u)$ is Fredholm of index zero from $\D(\bar A) \cap \Cb$ (equipped with the graph norm) into $\CC$. Because of Corollary \ref{iso}, this is the case if and only if the operator
$$
(\bar A-B(u))^{-1}(\bar A-F'(u))=
I-(\bar A-B(u))^{-1}\widetilde B (u)=
I-(I-C(u))^{-1}D(u)\widetilde B (u)
$$
is Fredholm of index zero from $\CC$ into $\CC$.
Hence, it suffices to show that 
$$
\left[(I-C(u))^{-1}D(u)\widetilde B (u)\right]^2 \mbox{ is compact from $\CC$ into $\CC$.} 
$$
 This is a consequence of the following Fredholmness criterion of S. M. Nikolskii (cf. e.g.  \cite[Theorem XIII.5.2]{Kant}):
If $U$ is a Banach space and $K:U\to U$ is a linear bounded operator   such that $K^2$ is compact, then
   the operator $I-K$ is Fredholm of index zero.
   
Since $u$ is fixed, in what follows in this subsection we will not mention the dependence of the operators $B(u)$, $\widetilde B (u)$, $C(u)$ and $D(u)$ on $u$, i.e. $B:=B(u)$, $\widetilde B:=\widetilde B (u)$, $C:=C(u)$, and $D:=D(u)$.
A straightforward calculation shows that
\be
\label{ident}
\left[(I-C)^{-1}D\widetilde B\right]^2
=(I-C)^{-1}
\left[(D\widetilde B)^2+D\widetilde B C(I-C)^{-1}D\widetilde B\right].
\ee
Then, on account of Lemma \ref{Invert},  it suffices to show that the operators $D\widetilde B D$ and $D\widetilde B C$ are compact
from $\CC$ into $\CC$.

Let us show that $D_1\widetilde B D$  (and similarly for $D_2\widetilde B D$)
%bounded from $\CC$ into $\CC^1$ and, hence
%(because of the compact embedding of $\CC^1$ into $\CC$), 
is compact from $\CC$ into $\CC$. 
Take $v \in \CC$. By definition, $B$ and $\widetilde B$ are the "diagonal" and the "non-diagonal" parts of $F'(u)$. Therefore,
$$
[\widetilde B v](t,x)= 
\Bigl(\partial_{u_2}f_1(x,u(t,x))v_2(t,x),\partial_{u_1}f_2(x,u(t,x))v_1(t,x)\Bigr).
$$
Hence, the first component of 
$[D\widetilde B v](t,x)$ is
$$
[D_1\widetilde B v](t,x)=\int_0^x\frac{c_1(t,x,y)}
{a_1(y)}
\partial_{u_2}f_1(y,u(t+\al_1(x,y),y))v_2(t+\al_1(x,y),y)\,dy.
$$
Therefore,
 \be
\label{D1BD}
[D_1\widetilde B Dv](t,x)=\int_0^x\int_y^1d(t,x,y,z) v_2(t+\al_1(x,y)+\al_2(y,z),z)\,dzdy
\ee
with
$$
d(t,x,y,z):=-\frac{c_1(t,x,y)c_2(t+\al_1(x,y),x,z)}{a_1(y)a_2(z)}
\partial_{u_2}f_1(y,u(t+\al_1(x,y),y)).
$$
We change the order of integration in \reff{D1BD} according to
\be
\label{change}
\int_0^xdy\int_y^1dz=\int_0^xdz\int_0^zdy+\int_x^1dz\int_0^xdy.
\ee

Let us consider the first summand in the right-hand side of \reff{change}. It is
the linear operator
\be
\label{firsts}
[{\cal K}v](t,x):=\int_0^x\int_0^zd(t,x,y,z) v(t+\al_1(x,y)+\al_2(y,z),z)\,dydz.
\ee
We have to show that  ${\cal K}$  is compact from $C([0,1]^2)$ (equipped with the maximum norm) into itself.
For that reason we replace
in the inner integral in \reff{firsts}  the integration variable $y$ with  a new integration variable $\eta$ according to
$$
\eta=\widehat{\eta}(t,x,y,z):=t+\al_1(x,y)+\al_2(y,z)=t+\int_x^y\frac{d\xi}{a_1(\xi)}+\int_y^z\frac{d\xi}{a_2(\xi)}.
$$
Because of the assumption that $a_1(x)\not=a_2(x)$ for all $x \in [0,1]$ (cf. \reff{sysass}), we have
$$
\partial_y\widehat{\eta}(t,x,y,z)=\frac{1}{a_1(y)}-\frac{1}{a_2(y)}\not=0
\mbox{ for all } y \in [0,1],
$$
i.e. the function $y \mapsto \widehat{\eta}(t,x,y,z)$ is strictly monotone. Let us denote its inverse function by $\eta \mapsto \widehat{y}(t,x,\eta,z)$. Then 
\be
\label{Idef}
[{\cal K}v_2](t,x)=\int_0^x\int_{\widehat{\eta}(t,x,0,z)}^{\widehat{\eta}(t,x,z,z)}
\tilde{d}(t,x,\eta,z)v_2(\eta,z)\,d\eta dz
\ee
with
$$
\displaystyle\tilde{d}(t,x,\eta,z):=\frac{d(t,x,\widehat{y}(t,x,\eta,z),z)}{\displaystyle\frac{1}{a_1(\widehat{y}(t,x,\eta,z))}-\frac{1}{a_2(\widehat{y}(t,x,\eta,z))}}. 
$$
Due to assumption \reff{sysass},
the function $\tilde{d}$ is continuous, and the function $\hat \eta$ is $C^1$-smooth.
Hence, the Arcela-Ascoli Theorem implies that the linear operator ${\cal K}$ is compact from $C([0,1]^2)$, 
%(cf., e.g. \cite[Chapter IV.6.4]{KF} or )
equipped with the maximum norm, into itself.

Similarly one shows that also the second summand in the right-hand side of \reff{change}, which is
$$
\int_x^1\int_0^xd(t,x,y,z) v_2(t+\al_1(x,y)+\al_2(y,z),z)\,dydz,
$$ 
generates a compact operator from $C([0,1]^2)$ into itself.

Finally, let us show that the operator $D\widetilde B C$ 
is compact from $\CC$ into itself.
We have (and similarly for $D_2\widetilde B C$)
$$
[D_1\widetilde B Cv](t,x)=\int_0^xd(t,x,y) v_1(t+\al_1(x,y)+\al_2(y,1),1)\,dy
$$
with
$$
d(t,x,y):=r_2\frac{c_1(t,x,y)c_2(t+\al_1(x,y),x,y)}{a_1(y)}
\partial_{u_1}f_2(y,u(t+\al_1(x,y),y)).
$$
Here we change the integration variable $y$ to $\eta=t+\al_1(x,y)+\al_2(y,1)$, and then proceed as above.

\subsection{Proof of Theorem \ref{sysTheorem}}
\label{Proofsys}
Take a function $u \in C(\R \times [0,1];\R^2)$ which satisfies the boundary and the periodicity conditions as in \reff{sys} and such that 
there exists a sequence $u^1,u^2,\ldots \in  C^1(\R \times [0,1];\R^2)$ satisfying the 
following convergence for $j=1,2$:
$$
|u^n_j(t,x)-u_j(t,x)|+
|\partial_tu^n_j(t,x)+a_j(x)\partial_xu^n_j(t,x)-f_j(x,u(t,x))| \to 0 \mbox{ for } n \to \infty,
$$
uniformly in $x\in[0,1]$ and $t\in\R$.
Then $u$ is a solution to \reff{absys1}, and Lemma \ref{BCD} $(iii)$ yields
\be
\label{ueq}
u=C(u)u+D(u)(F(u)-B(u)u).
\ee
Further, we suppose that $u$ satisfies one of the conditions \reff{nonressys} and \reff{nonressys1}.
Then, due to  Lemma~\ref{Fred} and Corollary \ref{app1},  all partial derivatives
$\partial_t^ku$, $k \in \N$, exist and are continuous. Therefore, all partial derivatives
$\partial_t^k$, $k \in \N$, of the functions $F(u)$ and $B(u)u$ exist and are continuous also. 
Hence, Lemma \ref{BCD} $(iv)$ and \reff{ueq} yield that $u \in \Cb^1$ and $\bar A u=Au$, i.e. $u$ is a classical solution to \reff{sys}. 
Assertion $(i)$  of Theorem \ref{sysTheorem} is therefore proved.

Similarly, if the functions $a_j$ and $f_j$, $j=1,2$, are $C^\infty$-smooth, then Lemma \ref{BCD} $(v)$ and \reff{ueq} yield that $u$ is $C^\infty$-smooth, i.e. assertion $(ii)$ of Theorem \ref{sysTheorem} is proved also.

\section{Proofs for second-order equations}
\label{proofeq}
\setcounter{equation}{0}
\setcounter{theorem}{0}
In this section we will prove  Theorem \ref{eqTheorem}. Hence, we  suppose that  assumption \reff{eqass} is satisfied.

\subsection{Transformation of the second-order equation into a first-order system}
\label{Transfo}
In this subsection we show that any solution $u$ to  problem \reff{eq} for a second-order equation creates a solution 
\be 
\label{trafo}
v_1(t,x):=\de_tu(t,x)+a(x)\de_xu(t,x),\quad
v_2(t,x):=\de_tu(t,x)-a(x)\de_xu(t,x)
\ee
to the following problem for a first-order  system of integro-differential
equations:
\be
\label{FOS}
%\left.
  \begin{array}{l}
\de_tv_1(t,x)-a(x)\de_xv_1(t,x)=
\de_tv_2(t,x)+a(x)\de_xv_2(t,x)\\
\displaystyle  =
f(x,[J v](t,x), [Kv](t,x),[L v](t,x))+\frac{a'(x)}{2}(v_1(t,x)-v_2(t,x)),\\ [3mm]
v_1(t,0)+v_2(t,0)=
v_1(t,1)-v_2(t,1)=0,\\
    v(t+1,x)=v(t,x),
  \end{array}
%\right\}
\ee
and vice versa. Here the partial integral operator $J$ is defined by
$$
[Jv](t,x):=\frac{1}{2}\int_0^x\frac{v_1(t,y)-v_2(t,y)}{a(y)}dy,
$$
and the "local" operators $K$ and $L$ are defined by 
$$
[Kv](t,x):=\frac{v_1(t,x)+v_2(t,x)}{2}, \quad [Lv](t,x):=\frac{v_1(t,x)-v_2(t,x)}{2a(x)}.
$$

\begin{lemma}\label{lem:FOS}
(i) If $u \in C^2(\R\times [0,1])$ is a solution to \reff{eq}, then the function $v \in C^1(\R\times [0,1];\R^2)$
 defined by~\reff{trafo}
is a solution to \reff{FOS}.

(ii) Let $v \in C^{1}(\R\times [0,1];\R^2)$ be a solution to \reff{FOS}. Then the function  $u:=Jv$
is $C^2$-smooth and is a solution to \reff{eq}.
\end{lemma}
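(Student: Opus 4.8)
The plan is to prove both implications by direct differentiation, exploiting the elementary identities produced by the substitution \reff{trafo}, namely $v_1+v_2=2\partial_tu$ and $v_1-v_2=2a\,\partial_xu$, together with their consequences $Kv=\partial_tu$, $Lv=\partial_xu$ and $Jv=u$ on solutions. For part $(i)$, let $u\in C^2(\R\times[0,1])$ solve \reff{eq}; since $a\in C^1([0,1])$, the function $v$ of \reff{trafo} lies in $C^1(\R\times[0,1];\R^2)$. I would first record $Kv=\partial_tu$, $Lv=\partial_xu$ and then $[Jv](t,x)=\int_0^x\partial_xu(t,y)\,dy=u(t,x)-u(t,0)=u(t,x)$, the last step using the boundary condition $u(t,0)=0$. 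The boundary and periodicity conditions of \reff{FOS} are then immediate: $v_1(t,0)+v_2(t,0)=2\partial_tu(t,0)=0$ from $u(t,0)\equiv0$, $v_1(t,1)-v_2(t,1)=2a(1)\partial_xu(t,1)=0$ from $\partial_xu(t,1)=0$, and $1$-periodicity in $t$ is inherited from $u$. It remains to check the two differential equations: differentiating $v_1=\partial_tu+a\partial_xu$ and using $\partial_t\partial_xu=\partial_x\partial_tu$ (valid since $u\in C^2$) gives $\partial_tv_1-a\partial_xv_1=\partial_t^2u-a^2\partial_x^2u-a\,a'\partial_xu$; substituting the PDE in \reff{eq} together with $a\,a'\partial_xu=\frac{a'}{2}(v_1-v_2)$ produces exactly the right-hand side of the equations in \reff{FOS}, with $f$ evaluated at $(x,Jv,Kv,Lv)=(x,u,\partial_tu,\partial_xu)$. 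The computation for $\partial_tv_2+a\partial_xv_2$ is identical, so $v$ solves \reff{FOS}.

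For part $(ii)$, let $v\in C^1(\R\times[0,1];\R^2)$ solve \reff{FOS} and put $u:=Jv$, so that $u(t,0)=0$ automatically. The core point is that $u\in C^2$ with $\partial_tu=Kv$ and $\partial_xu=Lv$. The identity $\partial_xu=Lv=(v_1-v_2)/(2a)$ is the fundamental theorem of calculus, and $Lv\in C^1$ since $v,a\in C^1$ and $a\ne0$. For the time derivative I would differentiate under the integral sign and then use the first equation of \reff{FOS} in the form $\partial_tv_1-\partial_tv_2=a\,\partial_x(v_1+v_2)$:
\[
\partial_t[Jv](t,x)=\frac{1}{2}\int_0^x\partial_x(v_1+v_2)(t,y)\,dy=\frac{1}{2}\big((v_1+v_2)(t,x)-(v_1+v_2)(t,0)\big)=[Kv](t,x),
\]
the last equality by the boundary condition $v_1(t,0)+v_2(t,0)=0$. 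Since $Kv,Lv\in C^1$, it follows that $u\in C^2(\R\times[0,1])$, and $u$ is $1$-periodic in $t$. The boundary conditions of \reff{eq} hold: $u(t,0)=0$ and $\partial_xu(t,1)=[Lv](t,1)=(v_1(t,1)-v_2(t,1))/(2a(1))=0$. To recover the PDE, compute $\partial_t^2u=\partial_t[Kv]=\frac{1}{2}(\partial_tv_1+\partial_tv_2)$, substitute $\partial_tv_1=g+a\partial_xv_1$ and $\partial_tv_2=g-a\partial_xv_2$ (with $g$ the common right-hand side in \reff{FOS}) to get $\partial_t^2u=g+\frac{a}{2}\partial_x(v_1-v_2)$, and then eliminate $\partial_x(v_1-v_2)$ via $v_1-v_2=2a\partial_xu$; the $a'$-terms combine with $g$ to leave exactly $\partial_t^2u-a^2\partial_x^2u=f(x,u,\partial_tu,\partial_xu)$.

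The only genuinely non-routine step is the identity $\partial_t(Jv)=Kv$ in part $(ii)$: this is where the structure of the system \reff{FOS} (the equality of the two flux expressions) and the boundary condition at $x=0$ are actually used, and without it one cannot identify the time derivative of $u:=Jv$, hence cannot even claim $u\in C^2$. Everything else is bookkeeping of the variable-coefficient correction terms $\frac{a'}{2}(v_1-v_2)$, which are arranged precisely so as to cancel; one only has to keep careful track of signs.
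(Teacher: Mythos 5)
Your proof is correct and takes essentially the same route as the paper's: the same identities $Kv=\partial_tu$, $Lv=\partial_xu$, $Jv=u$, direct differentiation of \reff{trafo} for part (i), and in part (ii) the same key step $\partial_t(Jv)=Kv$, derived from the equality of the two flux expressions in \reff{FOS} together with the boundary condition at $x=0$ (you are in fact slightly more careful with the factors $\frac{1}{2}$ that the paper's write-up drops). One caveat you share with the paper: the cancellation of the $a'$-terms that you assert works only if the term $\frac{a'(x)}{2}(v_1(t,x)-v_2(t,x))$ in \reff{FOS} carries a minus sign, since $\partial_tv_1-a\partial_xv_1=\partial_t^2u-a^2\partial_x^2u-aa'\partial_xu$; this is a sign typo in \reff{FOS} itself, not a gap in your argument.
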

\begin{proof}
  $(i)$ Let $u \in C^2(\R\times [0,1];\R^2)$ be given, and let $v \in 
 C^1_{per}(\R\times [0,1];\R^2)$ be defined by~\reff{trafo}. Then
\be
  \label{KJ}
  \de_tu=\frac{v_1+v_2}{2}=Kv,\quad  \de_xu=\frac{v_1-v_2}{2a}=L v
  \ee
and $\de_tv_1=\de^2_tu+a\de_t\de_xu$,  
$\de_xv_1=\de_t\de_xu+a' \de_xu + a\de_x^2u$, $\de_tv_2=\de^2_tu-a\de_t\de_xu$, and  $\de_xv_2=\de_t\de_xu-a' \de_xu - a\de_x^2u$.
Hence,
\be
  \label{uv}
  \de_t^2u-a^2\de_x^2u-aa'\de_xu=
  \de_tv_1-a\de_xv_1=
  \de_tv_2+a\de_xv_2.
  \ee
Further, let $u$ be a solution to problem \reff{eq}.  
Then \reff{KJ} and the boundary conditions $u(t,0)=0$ and $\de_xu(t,1)+\gamma u(t,1)=0$ imply that $v_1(t,0)+v_2(t,0)=0$ and
  $v_1(t,1)-v_2(t,1)+\gamma a(1) [Lv](t,1)=0$, i.e. the boundary conditions as in \reff{FOS}.
Moreover, from $u(t,0)=0$ and \reff{KJ} follows  that 
$u(t,x)=[Jv](t,x)$.
  Hence, \reff{KJ}, \reff{uv}, and the differential equation in \reff{eq} yield the differential equations as in \reff{FOS}.

 $ (ii)$ Let $v \in C^{1}(\R\times [0,1];\R^2)$ be a solution to \reff{FOS}. Set $u:=Jv$.
Then
\begin{eqnarray*}
  \de_tu(t,x)&=&
  \frac{1}{2}\int_0^x\frac{\de_tv_1(t,y)-\de_tv_2(t,y)}{a(y)}\,dy\\
  &=&
 \int_0^x\left(\de_yv_1(t,y)+\de_yv_2(t,y)\right)dy=
 v_1(t,x)+v_2(t,x).
  \end{eqnarray*}
  Here we used the first boundary condition and the differential equations in \reff{FOS}.
It follows that $\de_tu$ is $C^{1}$-smooth, and
  \be
  \label{uv1}
  \de^2_tu=\de_tv_1+\de_tv_2.
  \ee
  Further, the relation $u=Jv$ yields that $\de_xu=\de_x Jv=L v$,
  i.e.  $\de_xu$ is $C^{1}$-smooth also and, hence  $u$ is $C^{2}$-smooth. Moreover,
   $2(a'\de_xu+a\de_x^2u)=\de_xv_1-\de_xv_2$, i.e.
   \be
  \label{uv3}
  a^2\de^2_xu=\frac{a}{2}(\de_xv_1-\de_xv_2)-\frac{a'}{2}(v_1-v_2).
  \ee
  But \reff{FOS}, \reff{uv1}, and \reff{uv3} imply
the differential equation as in \reff{eq}. 

The first boundary condition in \reff{eq} follows from $u=Jv$, and 
the second boundary conditions in \reff{eq}
follows from $\de_xu=Lv$ and from the second boundary condition in \reff{FOS}.

\end{proof}

Unfortunately, we cannot apply  Theorem \ref{sysTheorem}  directly to system \reff{FOS} because there are nonlocal terms in the equations in \reff{FOS}. Hence, we adapt the content of Section \ref{proofssys} to the situation of system \reff{FOS}.

\subsection{Weak formulation of \reff{FOS}}
\label{weakeq}
We use the notation of $\al(x,y)$, $b_+(t,x,u)$ and
$b_-(t,x,u)$, which were introduced in Section~ \ref{Introduction}, as well as the function spaces $\CC$ and $\CC^1$, which were introduced in Section \ref{proofssys}.
Further, we introduce a linear bounded operator
$A:\CC^1\to\CC$ by
$$
Av:=(\partial_tv_1-\partial_xv_1,\partial_tv_2+\partial_xv_2)
$$
and a nonlinear $C^\infty$-smooth superposition operator $F:\CC\to \CC$ by 
$$
[F_j(v)](t,x):=f\left(x,[Jv](t,x),[Kv](t,x),[Lv](t,x)\right)
+\frac{a'(x)}{2}(v_1(t,x)-v_2(t,x)),\quad j=1,2.
$$
Any classical solution to \reff{FOS} is a solution to the problem
$Av=F(v)$
and, hence, a solution 
to the problem
\be
\label{abeq}
v \in \D(\bar A) \cap \Cb:\; \bar Av=F(v).
\ee

In order to apply Corollary \ref{app1} to problem 
\reff{abeq} (with $U=\D(\bar A) \cap \Cb$, $V=\CC$, and $\F(v)=\bar Av-F(v)$),
we proceed as in Section \ref{proofssys}.
We write (for $t \in \R$, $x \in [0,1]$, $v \in \CC$)
\begin{eqnarray*}
c_+(t,x,v)&:=& b_+(t,x,Jv)\\
&=&\partial_3f(x,[Jv](t,x),[Kv](t,x),[Lv](t,x))
+\frac{\partial_4f(x,[Jv](t,x),[Kv](t,x),[Lv](t,x))}{a(x)},\\
c_-(t,x,v)&:=& b_-(t,x,Jv)\\
&=&\partial_3f(x,[Jv](t,x),[Kv](t,x),[Lv](t,x))
-\frac{\partial_4f(x,[Jv](t,x),[Kv](t,x),[Lv](t,x))}{a(x)}.
\end{eqnarray*} 
Note that, if a function $u\in C^1(\R \times [0,1])$ satisfies condition \reff{nonreseq}, then the function $v\in C(\R \times [0,1];\R^2)$, which is defined by \reff{trafo}, satisfies the condition
\be
\label{nonreseqa}
\int_0^1\frac{c_{+}
(t+\al(x,1),x,v)
-c_{-}
(t-\al(x,1),x,v)}{a(x)}\,dx\not=0\quad
\mbox{ for all } t \in \R.
\ee
Similarly, if $u$ satisfies \reff{nonreseq1}, then $v$ satisfies the condition
\be
\label{nonreseqa1}
\int_0^1\frac{c_{+}
(t-\al(0,x),x,u)
-c_{-}
(t+\al(0,x),x,u)}{a(x)}\,dx\not=0\quad
\mbox{ for all } t \in \R.
\ee

We divide the linearization $F'(v)$ into three parts, a ``diagonal'' one, a ``non-diagonal'' one, and an ``integral'' one, as follows:
\be 
\label{divide} 
F'(v)=B(v)+\widetilde B (v)+\J(v)
\ee
with
$$
[B(v)w](t,x):=\frac{1}{2}\left[
\begin{array}{c}
(c_+(t,x,v)+a'(x))
w_1(t,x)\\
(c_-(t,x,v)-a'(x))
w_2(t,x)
\end{array}
\right],
$$
$$
[\widetilde B(v)w](t,x):=\frac{1}{2}\left[
\begin{array}{c}
(c_-(t,x,v)-a'(x))
w_2(t,x)\\
(c_+(t,x,v)+a'(x))
w_1(t,x)
\end{array}
\right],
$$
and 
$$
[\J_1(v)w](t,x)=[\J_2(v)w](t,x):=
\partial_2f(x,[Jv](t,x),[Kv](t,x),[Lv](t,x))
[Jw](t,x).
$$
As in Subsection \ref{weak}, for given $v \in \CC$,  we introduce linear bounded operators $C(v),D(v):\CC \to \CC$  by
$$
[C(v)w](t,x):=\left[
\begin{array}{c}
\displaystyle  -w_2(t-\al(x,0),0)
\sqrt{\frac{a(0)}{a(x)}}
\exp\left(-\frac{1}{2}\int_0^x\frac{c_+(t-\al(x,z),z,v)}{a(z)}\,dz\right) \\
\displaystyle  w_1(t+\al(x,1),1)
\sqrt{\frac{a(1)}{a(x)}}
\exp\left(-\frac{1}{2}\int_x^1\frac{c_-(t+\al(x,z),z,v)}{a(z)}\,dz\right) 
\end{array}
\right]
$$
and
$$%\begin{eqnarray*}
[D(v)w](t,x):=\left[
\begin{array}{c}
\displaystyle
-\frac{1}{\sqrt{a(x)}}
\int_0^x\frac{w_1(t-\al(x,y),y)}{\sqrt{a(y)}}
\exp\left(\frac{1}{2}\int_x^y\frac{c_+(t-\al(x,z),z,v)}{a(z)}\,dz\right)dy,\\
\displaystyle
-\frac{1}{\sqrt{a(x)}}
\int_x^1\frac{w_2(t+\al(x,y),y)}{\sqrt{a(y)}}
\exp\left(\frac{1}{2}\int_y^x\frac{c_-(t+\al(x,z),z,v)}{a(z)}\,dz\right)dy
\end{array}
\right].
$$%\end{eqnarray*}
Here we adapted the definitions of $C(u)v$ and $D(u)v$ from Subsection \ref{weak} as follows: We replaced $u$ by $v$, $v$ by $w$, $a_1$ by $-a$, $a_2$ by $a$, $r_1$ by minus one, $r_2$ by one, $\partial_{u_1}f_1(x,u(t,x))$ by $\frac{1}{2}(c_+(t,x,v)+a'(x))$ and $\partial_{u_2}f_2(x,u(t,x))$ by $\frac{1}{2}(c_-(t,x,v)-a'(x))$. We used also the identity
$$
\exp\left(\frac{1}{2}\int_x^y\frac{a'(z)}{a(z)}\,dz\right)=\sqrt{\frac{a(y)}{a(x)}}.
$$
Remark that, if in \reff{nonressys} and in \reff{nonressys1} we replace
$a_1$ by $-a$, $a_2$ by $a$, $r_1$ by minus one, $r_2$ by one, 
$\partial_{u_1}f_1(x,u(t,x))$ by $\frac{1}{2}(c_+(t,x,v)+a'(x))$, and $\partial_{u_2}f_2(x,u(t,x))$ by $\frac{1}{2}(c_-(t,x,v)-a'(x))$, we immediately get \reff{nonreseqa} and \reff{nonreseqa1}.

Finally, the subspace of all functions, which satisfy the boundary conditions as in \reff{FOS}, is
$
\Cb:=\{v \in \CC:\;
v_1(t,0)+v_2(t,0)=
v_1(t,1)-v_2(t,1)=0\}.
$

Similar to  Lemmas \ref{BCD} and  \ref{Invert} and Corollary \ref{iso}, we get the following:
\begin{lemma}
\label{isoeq}
If $v \in \CC$ satisfies one of the conditions \reff{nonreseqa}
and \reff{nonreseqa1}, then $I-C(v)$ is bijective from $\CC$ onto $\CC$, 
 $\bar A-B(v)$ is bijective from $\D(\bar A)\cap \Cb$ onto $\CC$, and
$$
(\bar A-B(v))^{-1}w=(I-C(v))^{-1}D(v)w\quad
\mbox{for all } w \in \CC.
$$
\end{lemma}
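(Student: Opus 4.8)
The plan is to mirror, essentially line by line, the development of Subsection~\ref{weak}: first establish the integro-differential identities of Lemma~\ref{BCD}(i)--(iii) for the present operators, then the bijectivity of $I-C(v)$ on $\CC$ as in Lemma~\ref{Invert}, and finally combine the two exactly as in Corollary~\ref{iso}. All the structural ingredients are already in place. The operator $A$ of Subsection~\ref{weakeq} has the same form as the operator $A$ of Section~\ref{proofssys} with $a_1$ replaced by $-a$ and $a_2$ by $a$; the subspace $\Cb$ collects reflection boundary conditions with $r_1=-1$, $r_2=1$; and $B(v)$, $C(v)$, $D(v)$ were built from $B(u)$, $C(u)$, $D(u)$ precisely by the substitutions listed after their definitions, the extra weight $\sqrt{a(y)/a(x)}$ accounting for the term $\frac{a'(x)}{2}(v_1-v_2)$, which has been absorbed into the ``diagonal'' part $B(v)$ via $\exp\bigl(\frac12\int_x^y a'/a\bigr)=\sqrt{a(y)/a(x)}$.

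First I would record the analogue of Lemma~\ref{BCD}(i)--(iii): for all $v,w\in\CC$ one has $C(v)w\in\D(\bar A)$ with $(\bar A-B(v))C(v)w=0$ and $D(v)w\in\D(\bar A)$ with $(\bar A-B(v))D(v)w=w$; and for $v\in\CC$, $w\in\D(\bar A)\cap\Cb$ one has $D(v)(\bar A-B(v))w=w-C(v)w$. The proofs are the same computations as in the proof of Lemma~\ref{BCD}, now carried out along the characteristics of the present $A$ (the first component propagates with speed $-a$ and hits the boundary $x=0$, the second with speed $a$ and hits $x=1$): approximate $v$ and $w$ in $\CC$ by $\CC^1$-functions, differentiate $C(v^n)w^n$ and $D(v^n)w^n$ along characteristics using $\partial_y\al(x,y)=1/a(y)$ and the product rule for the exponential weights, and pass to the limit; for (iii) one uses, at $x=0$ and $x=1$ respectively, the boundary conditions defining $\Cb$. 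The analogues of Lemma~\ref{BCD}(iv),(v) hold by the same argument, though they are not needed here.

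Next I would prove the analogue of Lemma~\ref{Invert}: under \reff{nonreseqa} or \reff{nonreseqa1} the operator $I-C(v)$ is bijective on $\CC$. As in the proof of Lemma~\ref{Invert}, solving $w=C(v)w+f$ is reduced, by writing out the two boundary relations and eliminating one unknown, to a scalar functional equation $(I-\widetilde C)\tilde w=\tilde f$ in $C_{per}(\R)$, where $\widetilde C$ is multiplication by a continuous $1$-periodic function composed with a shift of the argument. When $C(v)$ is composed around the full boundary loop (from $x=0$ to $x=1$ and back to $x=0$) the radicals cancel, $\sqrt{a(0)/a(1)}\,\sqrt{a(1)/a(0)}=1$, so the modulus of the multiplier is the exponential of an integral of $c_\pm/a$ over $[0,1]$ which, by the translation of \reff{nonressys}/\reff{nonressys1} into \reff{nonreseqa}/\reff{nonreseqa1} already recorded in the text, is $\neq 1$ for every $t\in\R$. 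By continuity and $1$-periodicity this modulus is then either $<1$ uniformly in $t$ or $>1$ uniformly in $t$; in the first case $\|\widetilde C\|_{\LL(C_{per}(\R))}<1$ and a Neumann series gives the unique solution, in the second case one first rewrites the equation by inverting the multiplier, reducing to the first case. Inserting the solution back into the two boundary relations produces the unique $w\in\CC$; if $f=D(v)g$, the form of $C(v)$ and $D(v)$ also shows $w\in\Cb$.

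Finally I would assemble the statement as in the proof of Corollary~\ref{iso}. For injectivity of $\bar A-B(v)$ on $\D(\bar A)\cap\Cb$: if $(\bar A-B(v))w=0$, then identity (iii) gives $(I-C(v))w=0$, hence $w=0$ by the previous step. For surjectivity and the inversion formula: given $w\in\CC$, choose (previous step) $v'\in\CC$ with $v'=C(v)v'+D(v)w$; then $v'\in\Cb$ by the form of $C(v)$, $D(v)$, and identities (i),(ii) give $v'\in\D(\bar A)$ with $(\bar A-B(v))v'=(\bar A-B(v))(C(v)v'+D(v)w)=w$, which is also the asserted formula $(\bar A-B(v))^{-1}w=(I-C(v))^{-1}D(v)w$. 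The only part requiring genuine care is the bookkeeping in the middle step: checking that the exponential weights from integrating the diagonal coefficients $\frac12(c_++a')$ and $\frac12(c_--a')$ along the characteristics of $A$ telescope correctly around the boundary loop so that the surviving exponent is exactly the non-resonance integral, and tracking the sign coming from $r_1=-1$. Everything else is a transcription of the arguments already given for the first-order system.
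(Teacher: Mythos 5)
Your proposal is correct and follows exactly the route the paper takes: the paper proves Lemma \ref{isoeq} simply by transcribing Lemmas \ref{BCD} and \ref{Invert} and Corollary \ref{iso} under the stated substitutions ($a_1\to-a$, $a_2\to a$, $r_1\to-1$, $r_2\to 1$, diagonal coefficients $\frac12(c_\pm\pm a')$), which is precisely what you do, with the correct bookkeeping of the $\sqrt{a(y)/a(x)}$ weights, the boundary-loop cancellation, and the membership in $\Cb$.
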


\subsection{Fredholmness}
\label{Fredholmneq}
\begin{lemma}
\label{Fredeq1}
Let a function $v \in \CC$ satisfy one of the conditions
\reff{nonreseqa} and \reff{nonreseqa1}. Then the operator $I-C(v)-D(v)(\widetilde B(v)+\J(v))$ is Fredholm of index zero from $\CC$ to itself.
\end{lemma}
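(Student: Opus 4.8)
The plan is to mimic the proof of Lemma~\ref{Fred} in Subsection~\ref{Fredholm}, with one extra observation disposing of the nonlocal operator $\J(v)$. Since $v$ satisfies one of \reff{nonreseqa} and \reff{nonreseqa1}, Lemma~\ref{isoeq} tells us that $I-C(v)$ is an isomorphism of $\CC$. Abbreviating $M:=D(v)\bigl(\widetilde B(v)+\J(v)\bigr)$, we have
$$
I-C(v)-M=\bigl(I-C(v)\bigr)\bigl(I-(I-C(v))^{-1}M\bigr),
$$
so the operator in question is Fredholm of index zero if and only if $I-\KK$ is, where $\KK:=(I-C(v))^{-1}M$. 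By the Fredholmness criterion of S.~M. Nikolskii used in Subsection~\ref{Fredholm} it suffices to show that $\KK^2$ is compact from $\CC$ into $\CC$. Inserting $(I-C(v))^{-1}=I+C(v)(I-C(v))^{-1}$ exactly as for \reff{ident} gives
$$
\KK^2=(I-C(v))^{-1}\bigl[M^2+MC(v)(I-C(v))^{-1}M\bigr].
$$
Since $M^2=\bigl(D(v)(\widetilde B(v)+\J(v))D(v)\bigr)(\widetilde B(v)+\J(v))$ and the second summand factors through $D(v)(\widetilde B(v)+\J(v))C(v)$, it is enough to prove that the two operators $D(v)(\widetilde B(v)+\J(v))D(v)$ and $D(v)(\widetilde B(v)+\J(v))C(v)$ are compact from $\CC$ into $\CC$.

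I would then split $\widetilde B(v)+\J(v)$ into the non-diagonal part $\widetilde B(v)$ and the integral part $\J(v)$. For $D(v)\widetilde B(v)D(v)$ and $D(v)\widetilde B(v)C(v)$ I would repeat the computation of Subsection~\ref{Fredholm} verbatim under the substitutions already used in Subsection~\ref{weakeq} (replace $a_1$ by $-a$, $a_2$ by $a$, $r_1$ by $-1$, $r_2$ by $1$, $\partial_{u_1}f_1(x,u)$ by $\frac{1}{2}(c_+(t,x,v)+a'(x))$ and $\partial_{u_2}f_2(x,u)$ by $\frac{1}{2}(c_-(t,x,v)-a'(x))$): change the order of integration as in \reff{change}, substitute a new time variable along the characteristics, and invoke the Arzela--Ascoli theorem. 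The non-degeneracy needed for that change of variables is here $-a(x)\not=a(x)$, which holds automatically because $a(x)\not=0$, and $a\in C^1([0,1])$ is more regularity than required; hence these two operators are compact.

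It then remains to treat the $\J(v)$-parts, and for this it is enough to prove that $D(v)\J(v)$ is itself compact from $\CC$ into $\CC$, since then $D(v)\J(v)D(v)$ and $D(v)\J(v)C(v)$ are compositions of a compact operator with a bounded one. Unwinding the definitions of $D(v)$ and $\J(v)$, one gets for the first component
$$
[D_1(v)\J(v)w](t,x)=\int_0^x\int_0^y\kappa(t,x,y,\xi)\,\bigl(w_1-w_2\bigr)\bigl(t-\al(x,y),\xi\bigr)\,d\xi\,dy,
$$
and similarly for the second, where $\kappa$ is continuous on $\{0\le\xi\le y\le x\le1\}\times\R$ and $1$-periodic in $t$ (here one uses $a\in C^1$, $v\in\CC$ and the continuity of $\partial_2f$ from \reff{eqass}). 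Interchanging the order of integration, $\int_0^x dy\int_0^y d\xi=\int_0^x d\xi\int_\xi^x dy$, and then substituting $\eta:=t-\al(x,y)$ in the inner integral — admissible because $\partial_y(t-\al(x,y))=-1/a(y)\not=0$ — turns this into an operator of the form $w\mapsto\int_0^x\int_{\Delta(t,x,\xi)}\widetilde\kappa(t,x,\eta,\xi)\,w_j(\eta,\xi)\,d\eta\,d\xi$ with continuous kernel $\widetilde\kappa$ and with the interval $\Delta(t,x,\xi)$ of integration depending continuously on its parameters. Using the $1$-periodicity in $t$ to restrict to a compact range of the time variable, the Arzela--Ascoli theorem yields compactness of $D_1(v)\J(v)$, and likewise of $D_2(v)\J(v)$, on the relevant space of continuous functions.

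I expect this last step to be the main obstacle: the operator $\J(v)$ alone is a Volterra-type partial integral operator in $x$ and is \emph{not} compact on $\CC$, because it has no smoothing effect in time; one genuinely has to use the composition with $D(v)$, which transports data along the characteristics and thereby couples $t$ with the spatial integration variables, in order to recover compactness. The attendant bookkeeping — one extra interchange of the order of integration and one extra change of variables compared with the first-order case, owing to the additional integration hidden in $J$ — is routine once the structure has been recognized.
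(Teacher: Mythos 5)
Your proposal is correct and follows essentially the same route as the paper: reduce via Lemma \ref{isoeq} and Nikolskii's criterion to compactness of $D(\widetilde B+\J)D$ and $D(\widetilde B+\J)C$, handle the $\widetilde B$-parts by repeating the argument of Lemma \ref{Fred} under the substitutions of Subsection \ref{weakeq}, and dispose of the $\J$-parts by showing that $D\J$ itself is compact via a change of variables along the characteristics and Arzela--Ascoli. Your extra interchange of the order of integration before substituting $\eta=t-\al(x,y)$ is harmless (the paper substitutes directly in the outer integral), and your observation that $\J(v)$ alone is not compact, so the composition with $D(v)$ is genuinely needed, matches the paper's strategy.
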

\begin{proof}
We proceed as in the proof of Lemma \ref{Fred}.
We have to show that the operator 
$I-(I-C(v))^{-1}\left[D(v)(\widetilde B (v)+\J(v))\right]$
is Fredholm of index zero from $\CC$ into $\CC$.
The compactness criterion of Nikolskii implies that it suffices to show that 
\be 
\label{Fredeq}
\Big((I-C(v))^{-1}\left[D(v)(\widetilde B (v)+\J(v))\right]\Big)^2 \mbox{ is compact from $\CC$ into $\CC$.} 
\ee
As $v$ is fixed,  we will drop the dependence of the operators $B(v)$, $\widetilde B (v)$, 
$C(v)$, $D(u)$, $E(v)$, and $\J(v)$ on $v$, i.e. $B:=B(v)$, $\widetilde B:=\widetilde B (v)$, $C:=C(v)$, $D:=D(v)$ and $\J=\J(v)$.
As in Subsection \ref{Fredholm},  we use the formula
$$
\left((I-C)^{-1}(D(\widetilde B+\J))\right)^2
=(I-C)^{-1}\left((D(\widetilde B+\J))^2
+D(\widetilde B+\J) C(I-C)^{-1}
D(\widetilde B+\J)\right).
$$
Similar to the proof of Lemma \ref{Fredholm}, to show \reff{Fredeq}  it suffices to prove that
the operators 
$$
\left(D(\widetilde B+\J)\right)^2=D\widetilde BD\widetilde B+D\J D\J+D\widetilde BD\J+D\J D\widetilde B
$$
and 
$
D(\widetilde B+\J)C=D\widetilde B C+D\J C
$
are compact 
from $\CC$ into itself.
Since in the proof of Lemma~\ref{Fred} we already showed  that the operators $D\widetilde BD$ and $D\widetilde BC$ are compact
from $\CC$ into itself,
 it suffices to show that  the operator
$D\J$ is compact
from $\CC$ into itself.
The first component of this operator (and similar for the second component) works as 
$$
[D_1\J w](t,x)=
\int_0^xc(t,x,y)
\int_0^y
\frac{w_1(t-\al(x,y),z)-w_2(t-\al(x,y),z)}{a(z)}\,dzdy
$$
with
$$
c(t,x,y):=-\left[\frac{\partial_2f(y,[Jv](s,y),[Kv](s,y),
[Lv](s,y))}{2\sqrt{a(x)a(y)}}
\exp\left(\frac{1}{2}\int_x^y
\frac{c_+(z,v(s,z))}{a(z)}\,dz\right)\right]_{s=t-\al(x,y)}.
$$
We replace the integration variable $y$ by
$\eta=\widehat{\eta}(t,y,z):=t-\al(x,y)$, hence
$d\eta=-dy/a(y)$.
If $y=\widehat{y}(t,\eta,z)$ is the inverse transformation, then we get
$$
[D_1\J w](t,x)=\int_t^{t-\al(0,x)}\int_0^{\widehat{y}(t,x,\eta)}
c(t,x,\widehat{y}(t,x,\eta))a(\widehat{y}(t,x,\eta))
\frac{w_1(\eta,z)-w_2(\eta,z)}{a(z)}\,dzd\eta.
$$
Hence, the linear operator
$w \in \CC \mapsto D_1\J w \in C([0,1]^2)$ is compact because of the Arzela-Ascoli Theorem.

\end{proof}

\subsection{Proof of Theorem \ref{eqTheorem}}
\label{Proofeq}
 Let $u\in  C^2(\R \times [0,1])$  be a  classical solution to \reff{eq}. %which satisfies one of the %conditions
%\reff{nonreseq}
%and
%\reff{nonreseq1}.
Then, due to  Lemma \ref{lem:FOS} $(i)$, the function $v \in  C^1(\R \times [0,1];\R^2)$ defined by  \reff{trafo} is a classical solution to system \reff{FOS} 
and, hence, a solution to the abstract equation \reff{abeq}.
If, moreover, $u$ satisfies one of the conditions
\reff{nonreseq}
and
\reff{nonreseq1}, then $v$
satisfies one of the conditions
\reff{nonreseqa}
and
\reff{nonreseqa1}. Because of Lemma \ref{Fredeq1},
Corollary \ref{app1} can be applied to the 
solution $v$ of the abstract equation \reff{abeq}.
Hence,  all partial derivatives 
$\partial_t^kv$, $k \in \N$, exist and are continuous. 
Since $u=Jv$,  all partial derivatives 
$\partial_t^ku$, $k \in \N$, exist and are continuous also. Therefore, assertion $(i)$ of Theorem \ref{eqTheorem} is proved.

In order to prove assertion $ (ii)$ of Theorem \ref{eqTheorem},
suppose that the functions $a$ and $f$ are $C^\infty$-smooth.
Then,  as in Subsection \ref{Proofsys}, we use Lemma \ref{BCD} $(v)$ and show 
that  $v$ is $C^\infty$-smooth.
Again, since $u=Jv$, 
$u$ is $C^\infty$-smooth, as desired.

\section{Appendix }
\label{appendix1}
\setcounter{equation}{0}
\setcounter{theorem}{0}
For given Banach spaces  $U$ and $V$, let  $S_\vp \in {\cal L}(U)$ and 
$T_\vp \in {\cal L}(V)$, with $\vp \in \R$, be
one-parameter $C_0$-groups on $U$ and $V$, respectively, i.e.
$$
\begin{array}{l}
S_\vp \circ S_\psi=S_{\vp + \psi} 
\mbox{ for all } \vp,\psi \in \R,\;
S_0=I,\\
\vp \in \R \mapsto S_\vp u \in U
\mbox{ is continuous for all } u \in U,
\end{array}
$$
and similarly for $T_\vp$. Further, let $\F:U\to V$ be a map such that
\be
\label{equvi}
\F(S_\vp u)=T_\vp\F(u) \mbox{ for all } \vp \in \R \mbox{ and } u \in U.
\ee

The following theorem is due to E. N. Dancer (see \cite[Theorem 1]{Dancer}). Roughly speaking, it claims the following: The map $\gamma \in \R \mapsto S_\gamma u \in U$ is not $C^1$-smooth, in general, but it is if $u$ solves an equivariant equation $\F(u)=0$ with a $C^1$-Fredholm map $\F$.
\begin{theorem}
\label{app}
Let $U$ and $V$ be   Banach spaces.
 Let $\F$ be $C^1$-smooth and $u^0 \in U$ be given  such that 
\be\label{uass}
\F(u^0)=0,
\mbox{ and $\F'(u^0)$  is Fredholm of index zero from $U$ into $V$.}
\ee
If condition \reff{equvi} is fulfilled, then the map $\gamma \in  \R \mapsto S_\gamma u^0 \in U$ is $C^1$-smooth.
\end{theorem}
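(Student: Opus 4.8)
The plan is to deduce Theorem~\ref{app} from the implicit/inverse function machinery for Fredholm maps together with the group action. First I would introduce the orbit map $\sigma:\gamma\in\R\mapsto S_\gamma u^0\in U$ and observe that, by the $C_0$-group property, it suffices to prove $C^1$-smoothness of $\sigma$ near $\gamma=0$: indeed $\sigma(\gamma+\psi)=S_\psi\sigma(\gamma)$, so smoothness at one point propagates to all of $\R$ by composition with the bounded operators $S_\psi$. The equivariance \reff{equvi} gives $\F(\sigma(\gamma))=T_\gamma\F(u^0)=0$ for all $\gamma$, so the whole orbit lies in the solution set $\F^{-1}(0)$.

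Next I would set $L:=\F'(u^0)$, which by \reff{uass} is Fredholm of index zero. Write $U=N\oplus U_1$ and $V=R\oplus V_1$ where $N=\ker L$, $R=\mathrm{range}\,L$, $\dim N=\dim V_1<\infty$, and let $P:V\to R$ be the continuous projection along $V_1$. By the Lyapunov--Schmidt reduction, near $u^0$ the equation $\F(u)=0$ is equivalent to a finite-dimensional system: solving $P\F(u^0+n+u_1)=0$ for $u_1=\Phi(n)$ via the implicit function theorem (here $\partial_{u_1}(P\F)(u^0)=L|_{U_1}:U_1\to R$ is an isomorphism), one reduces to the bifurcation equation $g(n):=(I-P)\F(u^0+n+\Phi(n))=0$, a $C^1$ map between the finite-dimensional spaces $N$ and $V_1$. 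The point is that the solution set of $\F=0$ near $u^0$ is homeomorphic, via a $C^1$-diffeomorphism of a neighborhood, to $g^{-1}(0)$; the orbit $\gamma\mapsto\sigma(\gamma)$ corresponds under this chart to a curve $\gamma\mapsto(n(\gamma),\Phi(n(\gamma)))$ where $n(\gamma)\in g^{-1}(0)$. Since $\gamma\mapsto\sigma(\gamma)$ is continuous (the group is $C_0$) and the chart is a homeomorphism, $\gamma\mapsto n(\gamma)$ is a continuous curve in the finite-dimensional set $g^{-1}(0)\subset N$.

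The remaining — and genuinely delicate — step is to upgrade the continuous curve $\gamma\mapsto n(\gamma)$ to a $C^1$ one. Here one uses that $\sigma$ is not merely continuous but is the orbit of a $C_0$-group, hence has a one-sided difference-quotient structure: $\tfrac{1}{h}(\sigma(h)-\sigma(0))$ need not converge in $U$, but its image $n(\cdot)$ lives in the finite-dimensional $N$, where weak and strong convergence of bounded sequences coincide, and one can extract convergent subsequences of the difference quotients. Differentiating $g(n(\gamma))=0$ formally gives $g'(0)\,\dot n=0$; combined with the fact that along the orbit the derivative must be consistent with the group structure ($n(\gamma+\psi)$ obtained by transporting $n(\gamma)$), this pins down $\dot n(0)$ uniquely as an element of $\ker g'(0)$, and Dancer's argument shows the difference quotients actually converge to it. I expect this passage from ``continuous curve in a finite-dimensional solution set, arising as a $C_0$-orbit'' to ``$C^1$ curve'' to be the main obstacle; everything before it is standard Lyapunov--Schmidt, and everything after it (transporting $C^1$-smoothness at $0$ to all $\gamma$ via $S_\psi$, then lifting back through the $C^1$ chart to conclude $\sigma\in C^1(\R;U)$) is routine. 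Since the statement is quoted verbatim from \cite{Dancer}, in the paper itself this would simply be cited rather than reproved.
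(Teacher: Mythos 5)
Your proposal should be measured against what the paper actually does with this statement: nothing is proved there at all --- Theorem~\ref{app} is quoted verbatim from Dancer and justified only by the citation \cite[Theorem 1]{Dancer}. You correctly anticipate this in your last sentence, and your framing (reduce by Lyapunov--Schmidt to a finite-dimensional bifurcation equation $g(n)=0$, represent the orbit as $\sigma(\gamma)=u^0+n(\gamma)+\Phi(n(\gamma))$ with $n(\gamma)$ a continuous curve in $\ker\F'(u^0)$, prove $C^1$-smoothness at $\gamma=0$ and propagate it along the orbit by the bounded operators $S_\psi$) is a sensible skeleton and is indeed the natural way one would attack Dancer's result.

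However, as a blind proof the attempt has a genuine gap, and it sits exactly at the step you yourself flag as ``delicate''. Continuity of $n(\gamma)$ gives no bound whatsoever on the difference quotients $\frac{1}{h}\bigl(n(h)-n(0)\bigr)$, so finite-dimensionality of $N$ does not by itself allow you to ``extract convergent subsequences''; some quantitative input (a Lipschitz or boundedness estimate coming from the equation and the group action) must be established first, and none is offered. Likewise, ``differentiating $g(n(\gamma))=0$ formally gives $g'(0)\dot n=0$'' presupposes the differentiability that is to be proved, and the assertion that the group structure ``pins down $\dot n(0)$ uniquely'' together with ``Dancer's argument shows the difference quotients actually converge to it'' is an appeal to the very theorem under discussion rather than an argument. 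So what you have written is a correct reduction of Theorem~\ref{app} to a nontrivial finite-dimensional statement, plus a citation of Dancer for that statement --- which is acceptable as a reproduction of the paper's practice (pure citation), but it is not a self-contained proof; the essential analytic content of Dancer's theorem (how equivariance converts a merely continuous orbit into a differentiable one) is still missing.
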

This theorem can easily be generalized to the $C^\infty$ case as follows:

\begin{corollary}
\label{app1}
Let $U$ and $V$ be   Banach spaces. Let $\F$ be $C^\infty$-smooth and $u^0 \in U$ be given  such that \reff{uass} is satisfied.
If condition \reff{equvi} is fulfilled, then the map $\gamma \in  \R \mapsto S_\gamma u^0 \in U$ is $C^\infty$-smooth.
\end{corollary}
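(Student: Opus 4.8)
The plan is to derive the $C^\infty$ statement from the $C^1$ statement of Theorem \ref{app} by an induction on the order of smoothness, applying Theorem \ref{app} at each step to an auxiliary equivariant equation on a product Banach space whose unknowns are the successive derivative-jets of the curve $\gamma\mapsto S_\gamma u^0$. Set $\phi(\gamma):=S_\gamma u^0$. I will show by induction on $n\ge 1$ that $\phi$ is $C^n$; the case $n=1$ is Theorem \ref{app}. Two preliminary remarks, valid whenever $\phi$ is $C^n$: since each $S_\gamma\in{\cal L}(U)$ is bounded and linear, the group law gives $\phi(\gamma+\sigma)=S_\gamma\phi(\sigma)$, hence $\phi^{(j)}(\gamma)=S_\gamma u^j$ with $u^j:=\phi^{(j)}(0)$ for every $j\le n$; and \reff{equvi} together with $\F(u^0)=0$ gives $\F(\phi(\gamma))=T_\gamma\F(u^0)=0$ for all $\gamma$, i.e.\ $\F\circ\phi\equiv 0$.

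For the inductive step, assume $\phi$ is $C^n$ and put $u^j:=\phi^{(j)}(0)$, $j=0,\dots,n$. On the Banach spaces $U^{n+1}$ and $V^{n+1}$, with the product $C_0$-groups $\widehat S_\gamma:=(S_\gamma,\dots,S_\gamma)$ and $\widehat T_\gamma:=(T_\gamma,\dots,T_\gamma)$, I introduce
\[
\G_n(w_0,\dots,w_n):=\bigl(G_0(w_0),\dots,G_n(w_0,\dots,w_n)\bigr),\qquad
G_j(w_0,\dots,w_j):=\frac{d^j}{d\sigma^j}\Big|_{\sigma=0}\F\Bigl(\textstyle\sum_{i=0}^j\frac{\sigma^i}{i!}w_i\Bigr).
\]
Because $\F$ is $C^\infty$, each $G_j$ is $C^\infty$; and the Fa\`a di Bruno formula shows that, for $j\ge 1$, $G_j(w_0,\dots,w_j)=\F'(w_0)w_j+R_j(w_0,\dots,w_{j-1})$, so that $G_j$ is independent of $w_k$ for $k>j$ and affine in $w_j$ with linear part $\F'(w_0)$ (while $G_0=\F$).

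Three facts have to be checked. (a) $\G_n(u^0,\dots,u^n)=0$: by the chain rule the $j$-th derivative at $\gamma=0$ of a composition depends only on the inner derivatives $\phi^{(i)}(0)=u^i$, $i\le j$, so $\frac{d^j}{d\gamma^j}\big|_0\F(\phi(\gamma))=G_j(u^0,\dots,u^j)$, which vanishes since $\F\circ\phi\equiv 0$. (b) $\G_n$ is equivariant, $\G_n(\widehat S_\gamma z)=\widehat T_\gamma\G_n(z)$: differentiating \reff{equvi} repeatedly in the variable $u$ gives $\F^{(k)}(S_\gamma w)(S_\gamma h_1,\dots,S_\gamma h_k)=T_\gamma\F^{(k)}(w)(h_1,\dots,h_k)$ by induction on $k$, and substituting $w=\sum_i\frac{\sigma^i}{i!}w_i$ and applying $\frac{d^j}{d\sigma^j}\big|_{\sigma=0}$ (which commutes with the bounded operator $T_\gamma$) yields the identity componentwise. (c) $\G_n'(u^0,\dots,u^n)$ is Fredholm of index zero: by the structural description above it is a block lower-triangular operator from $U^{n+1}$ to $V^{n+1}$ whose every diagonal block equals the Fredholm-index-zero operator $\F'(u^0)$, and a block-triangular operator matrix with Fredholm diagonal blocks is Fredholm of index equal to the sum of the diagonal indices (a standard fact, provable by induction on the number of blocks: one builds a parametrix from a parametrix of $\F'(u^0)$, or uses that the Fredholm set is open and the homotopy deleting the off-diagonal blocks keeps the index constant), so here the index is $(n+1)\cdot 0=0$.

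Given (a)--(c), Theorem \ref{app} applied to $\G_n$ at $(u^0,\dots,u^n)$ shows that $\gamma\mapsto\widehat S_\gamma(u^0,\dots,u^n)=(S_\gamma u^0,\dots,S_\gamma u^n)$ is $C^1$; in particular its last component $\gamma\mapsto S_\gamma u^n=\phi^{(n)}(\gamma)$ is $C^1$, so $\phi^{(n)}$ is continuously differentiable, i.e.\ $\phi$ is $C^{n+1}$. This closes the induction, so $\gamma\mapsto S_\gamma u^0$ is $C^\infty$. I expect the main obstacle to be the bookkeeping for $\G_n$ --- simultaneously confirming that it is $C^\infty$, that it is equivariant for the product groups, and that its linearization at $(u^0,\dots,u^n)$ is block-triangular and hence Fredholm of index zero --- together with the elementary but slightly fiddly fact about Fredholmness of triangular operator matrices; everything else follows directly from Theorem \ref{app} and the group law.
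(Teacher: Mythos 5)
Your proposal is correct and follows essentially the same route as the paper: induction on the order of smoothness, an augmented equivariant system on $U^{k+1}\to V^{k+1}$ built from $\F$ and its derivatives along the orbit, a block lower-triangular linearization with diagonal $\F'(u^0)$ shown to be Fredholm of index zero, and an application of Theorem \ref{app} at each step. The only (harmless) differences are that you realize the higher-order maps explicitly as $\sigma$-derivatives of $\F$ along Taylor-polynomial curves instead of the paper's maps $\F_l$ written in terms of the generator $\A$, and that you justify Fredholmness of the triangular block operator by a parametrix/homotopy argument rather than the paper's splitting $\F'(u^0)=\J+\KK$ with $\J$ bijective and $\KK$ compact.
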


\begin{proof}
We have to show that for any $k \in \N$ the map 
$\vp \in  \R \mapsto S_\vp u^0 \in U$ is $C^k$-smooth. To this end, we  use the induction in $k$.

The assertion for $k=1$ is true due to Theorem \ref{app}.

Doing the induction step, we suppose that, for a fixed $k \in \N$,  the map 
$\vp \in  \R \mapsto S_\vp u^0 \in U$ is $C^k$-smooth and  show that this map is  $C^{k+1}$-smooth.

We denote by $\A:\D(\A) \subseteq U \to U$ the infinitesimal generator of the $C_0$-group $S_\vp$, i.e.
$$
\D(\A):=\{u \in U:\; \vp \in  \R \mapsto S_\vp u \in U \mbox{ is $C^1$-smooth}\},\;
\A u:=\frac{d}{d\vp}S_\vp u |_{\vp=0} \mbox{ for } u \in \D(\A).
$$
Similarly we define  $\D(\A^l)$ and $\A^l$ with $l\ge 2$.
In particular, we have
\begin{eqnarray*}
&&\frac{d}{d\vp}\F(S_\vp u)|_{\vp=0}=\F'(u)\A u
\mbox{ for } u \in \D(\A),\\
&& \frac{d^2}{d\vp^2}\F(S_\vp u)|_{\vp=0}=\F'(u)\A^2u+\F''(u)(\A u,\A u)=0 \quad\mbox{for } u \in \D(\A^2)
\end{eqnarray*}
More precisely, 
there exist  $C^\infty$-maps $\F_l: U^l \to V$, $l \in \N$, such that 
$$
\frac{d^l}{d\vp^l}\F(S_\vp u)|_{\vp=0}=\F'(u)\A^lu+\F_l(u,\A u,\A^2 u, \ldots, \A^{l-1}u)\quad
\mbox{for  }  u \in \D(\A^l). 
$$
On account of \reff{equvi}, for all $\vp \in \R$ and
$u \in \D(\A^{l})$ it holds 
$$
\F_l(S_\vp u,S_\vp \A u,S_\vp\A^2 u, \ldots, S_\vp\A^{l-1} u)=T_\vp\F_l(u,\A u,\A^2 u, \ldots, \A^{l-1}u).
$$ 
hence, $\F(S_\vp u^0)\equiv 0$ yields that
$$
\F'(u^0)\A^lu^0+\F_l(u^0,\A u^0,\A^2 u^0, \ldots, \A^{l-1}u^0)=0\quad
\mbox{for  }  l\le k. 
$$

Now, let us consider the $C^\infty$-map $\G=(\G_0,\G_1,\dots,\G_k):U^{k+1} \to V^{k+1}$ defined by
\begin{eqnarray*}
&&\G_0(u_0,u_1,\ldots,u_k):=\F(u_0),\\
&&\G_j(u_0,u_1,\ldots,u_k):=\F'(u_0)u_j+\F_j(u_0,u_1,\ldots,u_{j-1})\quad
\mbox{for } j\le k.
\end{eqnarray*}
In order to apply Theorem \ref{app} to the equation $\G(u_0,u_1,\ldots,u_k)=0$ in its solution
$$
(u_0,u_1,\ldots,u_k)=(u^0,\A u^0,\ldots,\A^k u^0),
$$
we have to show that the derivative
$\G'(u^0,\A u^0,\ldots,\A^k u^0)$ is Fredholm operator of index zero from $U^{k+1}$ into $V^{k+1}$.
We have
$$
\G'_0(u^0,\A u^0,\ldots,\A^k u^0)
(u_0,u_1,\ldots,u_k)=\F'(u^0)u_0
$$
and, for $j\le k$,
$$
\G'_j(u^0,\A u^0,\ldots,\A^k u^0)
(u_0,u_1,\ldots,u_k):=\F'(u^0)u_j+\sum_{i=0}^{j-1}\partial_i\F_j(u^0,\A u^0,\ldots,\A^{j-1} u^0)u_i.
$$
Hence, $\G'(u^0,\A u^0,\ldots,\A^k u^0)$ is a triangular operator of the type
$$
\G'(u^0,\A u^0,\ldots,\A^k u^0)=
\left[
\begin{array}{cccc}
\F'(u^0) & 0 & 0 & \ldots \\
\partial_0\F_1(u^0) & \F'(u^0) & 0 & \ldots \\
\partial_0\F_2(u^0,\A u^0) & \partial_1\F_2(u^0,\A u^0) & \F'(u^0) & \ldots\\
\ldots &\ldots &\ldots &\ldots \\
\end{array}
\right]
$$ 
By assumption, $\F'(u^0)$ assumption is Fredholm operator of index zero from $U$ into $V$. Hence, there exist linear bounded operators $\J,\KK:U \to V$  
such that $\F'(u^0)=\J+\KK$, that
$\J$ is bijective and that $\KK$ is compact.
Therefore
$$
\G'(u^0,\A u^0,\ldots,\A^k u^0)=
\widetilde{\J}+\widetilde{\KK}
$$ 
with
$$
\widetilde{\J}:=
\left[
\begin{array}{cccc}
\J & 0 & 0 & \ldots \\
\partial_0\F_1(u^0) & \J & 0 & \ldots \\
\partial_0\F_2(u^0,\A u^0) & \partial_1\F_2(u^0,\A u^0) & \J & \ldots\\
\ldots &\ldots &\ldots &\ldots \\
\end{array}
\right],\;
\widetilde{\KK}:=
\left[
\begin{array}{cccc}
\KK & 0 & 0 & \ldots \\
0 & \KK & 0 & \ldots \\
0 & 0 & \KK & \ldots\\
\ldots &\ldots &\ldots &\ldots \\
\end{array}
\right],
$$ 
where $\widetilde{\J}$ is a bijective operator from $U^{k+1}$ to $V^{k+1}$, and $\widetilde{\KK}$ is a compact operator from $U^{k+1}$ into $V^{k+1}$.
Hence, $\G'(u^0,\A u^0,\ldots,\A^k u^0)$ a is Fredholm operator of index zero from $U^{k+1}$ to $V^{k+1}$. Now, Theorem \ref{app} yields that 
$$
\vp \in \R \mapsto (S_\vp u^0,S_\vp\A u^0,\ldots,S_\vp\A^k u^0) \in U^{k+1}
\mbox{ is $C^1$-smooth,}
$$
which means that $\vp \in \R \mapsto S_\vp u^0
\in U$ is $C^{k+1}$-smooth.
\end{proof}

\section*{Acknowledgments}
Irina Kmit was supported by the VolkswagenStiftung Project ``From Modeling and Analysis to Approximation''.

\end{document}